\crefname{hypothesis}{Hypothesis}{Hypotheses}
\title{Extremal Probability Bounds in Combinatorial Optimization\thanks{Submitted: August 2021}}
\author{Divya Padmanabhan\thanks{School of Mathematics and Computer Science, Indian Institute of Technology, Ponda-403401, Goa. Email: divya@iitgoa.ac.in}
\and
Selin Damla Ahipasaoglu\thanks{Mathematical Sciences, University of Southampton, Highfield Southampton SO17 1BJ. Email: sda1u20@soton.ac.uk}  \and
Arjun Ramachandra\thanks{Engineering Systems and Design, Singapore University of Technology and Design, 8 Somapah Road, Singapore 487372. Email: arjun\_ramachandra@mymail.sutd.edu.sg}  \and
Karthik Natarajan\thanks{Engineering Systems and Design, Singapore University of Technology and Design, 8 Somapah Road, Singapore 487372. Email: karthik\_natarajan@sutd.edu.sg}}
\newcommand{\argmin}{\operatornamewithlimits{argmin}}
\def\texitem#1{\par\smallskip\noindent\hangindent 25pt
               \hbox to 25pt {\hss #1 ~}\ignorespaces}
\newcommand{\mb}[1]{\mbox{\boldmath $#1$}}
\definecolor{darkred}{RGB}{139,0,0}
\definecolor{darkgreen}{RGB}{0,139,0}
\preto\equation{\par\nobreak\small\noindent}
\preto\align{\par\nobreak\small\noindent}
\begin{document}

\maketitle

\begin{abstract}
 In this paper, we compute the tightest possible bounds on the probability that the optimal value of a combinatorial optimization problem in maximization form with a random objective exceeds a given number, assuming only knowledge of the marginal distributions of the objective coefficient vector. The bounds are ``extremal'' since they are valid across all joint distributions with the given marginals. We analyze the complexity of computing the bounds assuming discrete marginals and identify instances when the bounds are computable in polynomial time. For compact 0/1 V-polytopes, we show that the tightest upper bound is weakly NP-hard to compute by providing a pseudopolynomial time algorithm. On the other hand, the tightest lower bound is shown to be strongly NP-hard to compute for compact 0/1 V-polytopes by restricting attention to Bernoulli random variables. For compact 0/1 H-polytopes, for the special case of PERT networks arising in project management, we show that the tightest upper bound is weakly NP-hard to compute by providing a pseudopolynomial time algorithm. The results in the paper complement existing results in the literature for computing the probability with independent random variables.
\end{abstract}
\vspace{-0.25cm}
\begin{keywords}
Probability Bounds, Combinatorial Optimization, PERT
\end{keywords}
\vspace{-0.25cm}
\begin{AMS}
90-08 , 90C05, 90C27, 
\end{AMS}
\vspace{-0.25cm}
\section{Introduction}
In this paper, we are interested in the random combinatorial optimization problem of the form:
\begin{equation}
\label{eqn:Z_c}
\begin{array}{rllll}
\displaystyle Z(\tilde{\bold{c}})  =  {\max} & \displaystyle\tilde{\bold{c}}'\bold{x} \\
 \mbox{s.t.} & \displaystyle \bold{x} \in \mathcal{X} \subseteq \{0, 1\}^n,
\end{array}
\end{equation}
where $\tilde{\bold{c}} = (\tilde{c}_1, \ldots, \tilde{c}_n)$ is an $n$-dimensional random vector and $\mathcal{X}$ is a subset of the set $\{0,1\}^n $. Our main goal is to compute bounds on the probability that the random optimal value $Z(\tilde{\bold{c}})$ is greater than or equal to a fixed number $r$ when the marginal distributions of the random variables $\tilde{c}_i$ for $i \in [n]$ are specified. Throughout the paper, given a nonnegative integer $n$, we let $[n]$ denote the set $\{1, \ldots, n\}$ and given integers $n_1 \leq n_2$, we let $[n_1,n_2]$ denote the set $\{n_1,\ldots, n_2 \}$. We assume each random variable $\tilde{c}_i$ is discrete with marginal probabilities specified as $\mathbb{P}(\tilde{c}_i = c_{ik}) = p_{ik}$ for $k \in [0,K]$ and support given by ${\cal C}_i = \{c_{i0},\ldots,c_{ik}\}$ where the values are ordered as $c_{i0} < \ldots < c_{iK}$. The marginal probabilities satisfy $\sum_{k}p_{ik} = 1$ for all $i \in [n]$ and $p_{ik} \geq 0$ for all $i \in [n]$ and $k \in [0,K]$. Let $\Theta$ denote the set of all joint distributions on $\tilde{\bold{c}}$ consistent with the marginal distributions:
 \begin{align*}
\displaystyle \Theta  = \left\{ \theta \in \mathbb{P} (\prod_{i=1}^{n}{\cal C}_i): \mathbb{P}_{\theta}(\tilde{c}_{i} = c_{ik}) = p_{ik}, \text{ for }i \in [n], k \in [0,K]\right\},
\end{align*}
where $\mathbb{P} (\prod_{i=1}^{n}{\cal C}_i)$ is the set of all joint distributions supported on the set ${\cal C}_1 \times \ldots \times {\cal C}_n$.
Given a fixed value $r $, we are interested in computing the following extremal probability bounds:
\begin{align*}
\mbox{(Upper bound) } U(r) =  \max_{\theta \in \Theta} \mathbb{P}_{\theta}(Z(\tilde{\bold{c}}) \geq r), \\
\mbox{(Lower bound) } L(r) =  \min_{\theta \in \Theta} \mathbb{P}_{\theta}(Z(\tilde{\bold{c}}) \geq r).
\end{align*}
A related probability of interest to compute is when the random combinatorial optimization problem has mutually independent random variables in the objective coefficient vector. Specifically, let $\theta_{ind}$ be the joint distribution:
 \begin{align*}
\displaystyle \mathbb{P}_{\theta_{ind}}(\tilde{c}_{1} = c_{1k_1},\ldots,\tilde{c}_{n} = c_{nk_n}) = p_{1k_1}\times \ldots \times p_{nk_n}, \text{ for }k_1 \in [0,K],\ldots,k_n  \in [0,K],
\end{align*}
where $\theta_{ind} \in \Theta$. The probability for the independent distribution is given as:
\begin{align*}
\mbox{(Independence) } I(r) =  \mathbb{P}_{\theta_{ind}}(Z(\tilde{\bold{c}}) \geq r),
\end{align*}
where $U(r) \geq I(r) \geq L(r)$. We discuss the complexity of computing $U(r), L(r)$ and $I(r)$ in this paper.



\subsection{Applications}
 Our interest in studying these probability bounds are motivated from the applications discussed next.
 
(a) In simple settings, the extremal probability bounds discussed in this paper reduce to well known probability bounds. For example, consider computing an upper bound on the probability of occurrence of at least one of the $n$ events $E_1, \ldots, E_n$. If only the probabilities of occurrence of each individual event is known, Boole's union bound given by $\min(\sum_{i} \mathbb{P}(E_i),1)$ is tight. This bound arises as a special case of the framework above, by defining the Bernoulli random variables as $\tilde{c}_i = 1$ if $E_i$ occurs and $\tilde{c}_i = 0 $ otherwise, setting $Z(\tilde{\bold{c}}) = \sum_{i} \tilde{c}_i$ and $r = 1$. Bounds on the sum of random variables when only the marginal distributions are given has been extensively studied in the risk, insurance and finance settings; see Chapter 4 in \cite{ruschbook3}.
%

(b) In the context of  Program Evaluation and Review Technique (PERT) networks, the distribution of the completion time of a project needs to be estimated where the project is composed of several activities with random activity times \cite{elmaghraby1977activity}. Planning decisions are made taking into account the distribution of the project completion time. In this setting, $Z(\tilde{\bold{c}})$ is the optimal value of a longest path problem on a directed acyclic graph where the arc length vector $\tilde{\bold{c}}$ denotes the random activity duration vector. The probability of the completion time exceeding a deadline $r$ is a relevant measure of the performance of the project (higher the probability, worse the performance). Much of the literature has looked at computing this probability under the assumption of independence or limited dependence among the activity durations  \cite{Fulkerson1962,Dodin1985,Hagstrom1988,Ball1995,Mohring2001}.
However in PERT networks, there is evidence of significant dependence occurring among the activity durations when the resources are shared across activities or when adverse events affect all activities \cite{Ringer1971}.
This motivates the interest in the computation of extremal probability bounds.

(c) In the context of reliability, the probability of a system being functional is characterized in terms of the probabilities of the subcomponents being operational. Extremal probability bounds then provide an estimate of the robustness of the system to dependence among the subcomponents; see the book of \cite{waykuo}. For example, the $s$-$t$ reliability measure (probability that there exists at least one operational path from node $s$ to node $t$ in a graph) is computed by assuming each edge $(i,j)$ on the graph is associated with a Bernoulli random variable $\tilde{c}_{ij}$ where $\tilde{c}_{ij} = 1$ if the arc is operational and $0$ if it fails and formulating $Z(\tilde{\bold{c}})$ as a minimum $s$-$t$ cut problem with $r=1$.


\subsection{Existing Results and Contributions of This Paper}
Evaluating $Z(\bold{c})$ is already NP-hard for the class of deterministic combinatorial optimization problems.
In this paper we focus on combinatorial optimization problems where the convex hull of the feasible region has a compact representation and $Z(\bold{c})$ is computable in polynomial time. Two representations we consider are described next:

(a) V-polytope: The convex hull of the set $\mathcal{X} \subseteq \{0,1\}^n$, denoted by $\mbox{conv}(\mathcal{X})$, is given by a convex combination of a set of $P$ points:
    \begin{equation}
\label{eqn:V}
\begin{array}{rllll}
\displaystyle \mbox{conv}(\mathcal{X}) & = & \displaystyle \mbox{conv}\{\bold{x}^1, \ldots, \bold{x}^P \},\\
& = & \displaystyle \left\{\sum_{j=1}^{P}\lambda_j\bold{x}^j : \sum_{j=1}^{P}\lambda_j = 1, \lambda_j \geq 0, \text{ for }j \in [P]\right\},
\end{array}
\end{equation}
where $\bold{x}^1, \ldots, \bold{x}^P \in \{0,1\}^n$. In this representation, $P$ is typically exponential in $n$ and so (\ref{eqn:V}) is only useful when $P$ is allowed to be part of the input size specification. The size of the input instance for computing $U(r)$ or $L(r)$ in this case is given by: $$\displaystyle \mbox{Size of input } = O(\max(K,P)n\max(\log_2 U_1,\log_2 U_2)),$$
where $K+1$ is an upper bound on the size of any marginal support, $n$ is the number of random variables, $P$ is the number of extreme points in the V-polytope, $U_1$ and $U_2$ are the maximum numerical values among the integers in the ratio representation of the rational numbers $p_{ik}$ and $c_{ik}$ across all $i$ and $k$. The logarithmic dependence of the input size on the magnitude of the input probabilities and the support points arises since $O(\log_2 U)$ binary digits are needed to represent a positive integer $U$.\\
 For $P = 1$ and $\bold{x} = \bold{1}_n$ (the vector of all ones), we get $Z(\tilde{\bold{c}}) = \sum_{i} \tilde{c}_i$.
Even for the sum of random variables, computing $U(r)$ and $L(r)$ have been shown to be NP-hard for two point marginal distributions \cite{Kreinovich2006} using a reduction from the partition problem. Computing $I(r)$ with two point marginal distributions has also shown to be \#P-hard \cite{kleinberg} using a reduction from the problem of counting the number of feasible solutions to a 0-1 knapsack problem. In special cases, the bounds are efficiently computable. These include the sum of $n = 2$ random variables \cite{makarov,ruschendorff1982} where simple formulas exist for arbitrary distributions and for the sum of $n$ random variables with $K = 1$ (Bernoulli random variables) \cite{ruger1978}. Many other bounds, not necessarily tight have also been proposed in the literature (see Chapter 4 in \cite{ruschbook3} for several such bounds). \\
We add to this stream of results by showing that for compact 0/1 V-polytopes, the upper bound $U(r)$ is in fact weakly NP-hard to compute by providing a pseudopolynomial time algorithm. Specifically, we show that when the random variables take values $c_{ik} = k$ for $k \in [0,K]$, it is possible to compute $U(r)$ by solving a linear program that is of polynomial size in $K$, $n$, $P$ and $\log_2(U_1)$. The key aspect of this result is that dependence on the parameter $U_2$ is overcome. Furthermore for Bernoulli random variables, we provide further reduction in the polynomial size of the linear program for computing $U(r)$. On the other hand, we show the lower bound $L(r)$ is strongly NP-hard to compute. Specifically, we show that it is not possible to compute $L(r)$ in polynomial time in the input size even when the random variables are Bernoulli, unless P = NP. We also provide a \#P-hardness  result for independent Bernoulli random variables in this representation.


(b) H-polytope: The convex hull of the set $\mathcal{X} \subseteq \{0,1\}^n$ is given by: 
    \begin{equation}
\label{eqn:H}
\begin{array}{rllll}
\displaystyle \mbox{conv}(\mathcal{X}) = \{\bold{x} : \bold{A}\bold{x} \leq \bold{b}\},
\end{array}
\end{equation}
where the matrix $\bold{A}$ is of size $m \times n$ and $\bold{b}$ is a vector of length $m$. In this representation, the size of the input instance for computing $U(r)$ or $L(r)$ is given by:
\[ O(\max(K,m)n\max(\log_2 U_1,\log_2 U_2,\log_2 U_3)),\]
where in addition to the other parameters, $U_3$ is the maximum numerical value among the integers in the ratio representation of the rational numbers in the matrix $\bold{A}$ and vector $\bold{b}$.
An example of a combinatorial optimization problem with a compact 0/1 H-polytope representation is a PERT network where computing $Z(\bold{c})$ is possible in polynomial time. In PERT networks, the extreme points are characterized by the $s$-$t$ paths in the network which can be exponentially large. The V-polytope representation is not useful in this setting. However $Z(\bold{c})$ can be computed efficiently using a linear program which grows polynomially in the size of the network characterized by the number of nodes and edges in the graph, rather than the number of paths in the graph. Computing $I(r)$ is however known to be NP-hard for PERT networks even when the activity durations are Bernoulli random variables \cite{Hagstrom1988}. For certain classes of reliability problems, polynomial time computable bounds $U(r)$ and $L(r)$ have been proposed in the literature \cite{Zemel1982,Weiss1986}. However these formulations make use of the equivalence of separation and optimization \cite{grotschel2012geometric} to prove polynomial time complexity bounds without providing compact formulations that are easy to implement in practice.\\
We add to the stream of results in H-polytopes by showing that that for PERT networks a polynomial sized linear program can be used to compute the tightest upper bound $U(r)$ when the activity durations are restricted to take values in $[0,K]$. In turn, this shows that for PERT networks, the upper bound $U(r)$ is weakly NP-hard. This provides the maximum (worst case) probability of the random project completion time exceeding a given deadline. 

A related area of research is distributionally robust chance constraints \cite{xie2019optimized,Hanasusanto2017} wherein the constraints of an optimization problem are required to be satisfied with high probability. The difference of this line of research from our work is that we instead focus on computing the tail probabilities of  the objective value of an uncertain optimization problem. 

The structure of the paper is as follows. In \Cref{sec:v_polytope} and  \Cref{sec:h_polytope} respectively, we provide results for the V-polytope and the H-polytope. Numerical results provided in \Cref{sec:numericals} compare various probability bounds in random walks and PERT networks. We also show applications in models exhibiting limited dependence.

\section{Bounds for the V-Polytope}
\label{sec:v_polytope}
\subsection{Upper Bound}
We begin by developing a pseudopolynomial time algorithm for computing $U(r)$ for 0/1 V-polytopes. The bound is computed using a linear program. For the analysis, we assume that the support of each random variable $\tilde{c}_i$ is contained in ${\cal C}_i = [0,K]$. Under this restriction on support, we are looking for algorithms with running time polynomial in $K$, $n$, $P$ and $\log_2(U_1)$ thereby dropping the explicit dependence on the size of the input required to represent the marginal support values $c_{ik}$.
The support of the random vector is contained in $[0,K]^n$ which is of size $O(K^n)$. Let us first write an exponential sized LP to compute $U(r)$ (see \cite{hailperin1965}):
\begin{align*}
\begin{array}{rllll}
U(r) = \max  & \displaystyle \sum_{\bold{c} \in  [0,K]^n} \theta(\bold{c})\mathbbm{1}_{\{Z(\bold{c}) \geq r\}} \\
\mbox{s.t.}&\displaystyle  \sum_{\bold{c} \in  [0,K]^n} \theta(\bold{c})= 1,\\
&\displaystyle  \sum_{\bold{c} \in  [0,K]^n: c_i= k} \theta(\bold{c})= p_{ik}, \text{ for } i \in [n], k \in [0,K],\\
&\displaystyle  \theta(\bold{c})\geq 0, \text{ for } \bold{c} \in  [0,K]^n,
\end{array}
\end{align*}
where $\mathbbm{1}_{\{Z(\bold{c}) \geq r\}} = 1$ if $Z(\bold{c}) \geq r$ and $0$ otherwise and the decision variables are the joint probabilities $\theta(\bold{c}) = \mathbb{P}(\tilde{\bold{c}} = \bold{c})$ for $\bold{c} \in [0,K]^n$. The primal linear program has a polynomial number of constraints but an exponential number of variables. From strong duality,  $U(r)$ is the optimal value of the corresponding dual linear program,
\begin{align}
U(r) = \min & \; \; \displaystyle\lambda + \sum_{i=1}^n \sum_{k =0}^K \alpha_{ik} p_{ik} \nonumber\\
\mbox{s.t.}&\; \;  \displaystyle\lambda + \sum_{i=1}^n \sum_{k =0}^K \alpha_{ik} \mathbbm{1}_{\{c_i = k\}} \geq 1, \text{ for } Z(\bold{c}) \geq r, \bold{c} \in  [0,K]^n, \label{constr:1_general}\\
& \; \; \displaystyle \lambda + \sum_{i=1}^n \sum_{k=0}^K \alpha_{ik} \mathbbm{1}_{\{c_i = k\}}  \geq 0, \text{ for } \bold{c} \in  [0,K]^n,\label{constr:0_general}
\end{align}
where the decision variables are $\lambda$ and $\alpha_{ik}$ for $i \in [n]$ and $k \in [0,K]$. The dual linear program has a polynomial number of variables but an exponential number of constraints. By the equivalence of separation and optimization \cite{grotschel2012geometric}, a polynomial time algorithm to solve the underlying separation problem for the dual linear program implies the existence of a polynomial time algorithm to compute $U(r)$. We now show that the separation problems corresponding to the constraints \eqref{constr:1_general} and \eqref{constr:0_general} can be solved efficiently and develop a compact linear program to compute $U(r)$.
\begin{theorem}
\label{thm:probability_bound_general_discrete}
Let ${\cal X} = \{\bold{x}^1, \ldots, \bold{x}^P \} \subseteq \{0,1\}^n$. Given the marginal distributions of the random vector $\tilde{\bold{c}}$ as $\mathbb{P}(\tilde{c}_i = k) = p_{ik}$ for $k \in [0,K]$ and $i \in [n]$, the tightest upper bound is computable by solving the linear program:
\begin{align*}
\begin{array}{rllll}
U(r)= \max  & \displaystyle \sum_{\bold{x} \in \mathcal{X}} a_{\bold{x}} \\
\mbox{s.t.}& \displaystyle \sum_{\bold{x} \in \mathcal{X}} a_{\bold{x}} + w = 1, \\
 & \displaystyle h_{ik} + \sum_{\bold{x} \in \mathcal{X}} g_{ik\bold{x}}( 1- x_i) + \sum_{l = k}^{nK} \delta_{ikl\bold{x}} x_i = p_{ik}, \text{ for } i \in [1,n], k \in [0,K],\\
& \displaystyle \sum_{k \in [0, K]} h_{ik} = w , \text{ for } i \in [1, n],\\
& \displaystyle \sum_{k \in [0, K]} g_{ik\bold{x}} = a_{\bold{x}} , \text{ for } i \in [1, n], \bold{x} \in \mathcal{X},\\
&\displaystyle \sum_{k \in [0, K]} \tau_{l\bold{x}} =a_{\bold{x}} , \text{ for } l \in [r, nK], \bold{x} \in \mathcal{X},\\
& \displaystyle \sum_{k=0}^{\min(K,l)} \delta_{ikl\bold{x}} = \sum_{k=0}^{M} \delta_{i+1,k, l+k, \bold{x}} x_{i+1} + \sum_{k=0}^{\min(K,l)} \delta_{i+1,k,l,\bold{x}} (1-x_{i+1})\\
 &\displaystyle \;\;\;\;\;\; \;\;\;\;\;\; \;\;\;\;\;\; \;\;\;\;\;\;  \text{ for }i \in [b_{\bold{x}}+1,n], \text{ for } l \in [0, nK] , \bold{x} \in \mathcal{X}, \\
 &\displaystyle \delta_{b_\bold{x}, l, l, \bold{x}} = \sum_{k=0}^{\min(K,l)} \delta_{b_\bold{x} + 1, k, l+k, \bold{x}} \bold{x}_{b_\bold{x} + 1} + \sum_{k=0}^{\min(K,l)}\delta_{b_\bold{x} + 1, k, l, \bold{x}} (1-x_{b_x + 1}),  \\
 & \displaystyle \;\;\;\;\;\; \;\;\;\;\;\; \;\;\;\;\;\; \;\;\;\;\;\;  \text{ for } l \in [0, K] ,\bold{x} \in \mathcal{X}, \\
 &\displaystyle \boldsymbol{\delta},\bold{g}, \bold{h}, \bold{a}, w, \boldsymbol{\tau} \geq 0,
 \end{array}
\end{align*}
where for every $\bold{x} \in {\cal X}$, $b_{\bold{x}}$ denotes the smallest value of $i \in [n]$ for which $x_i = 1$ and $M=  \min(nK-l, k)$. Specifically the linear program is solvable in time polynomial in $K$, $n$, $P$ and $\log_2(U_1)$.
\end{theorem}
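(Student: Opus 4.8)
The plan is to work from the dual linear program with constraints \eqref{constr:1_general} and \eqref{constr:0_general} and to show that both of its exponential families admit efficient separation. For a fixed $\bold{c}\in[0,K]^n$ the left-hand side $\lambda+\sum_{i}\sum_{k}\alpha_{ik}\mathbbm{1}_{\{c_i=k\}}$ collapses to $\lambda+\sum_i\alpha_{i,c_i}$, since for each coordinate exactly one indicator is active. Hence the separation problem for \eqref{constr:0_general} is trivial: it suffices to check $\lambda+\sum_i\min_{k}\alpha_{ik}\ge 0$. The separation problem for \eqref{constr:1_general} is the only substantive one, namely to decide whether $\min\{\sum_i\alpha_{i,c_i}:Z(\bold{c})\ge r,\ \bold{c}\in[0,K]^n\}\ge 1-\lambda$.

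First I would decompose this over the extreme points. Since $Z(\bold{c})\ge r$ holds iff $\bold{c}'\bold{x}\ge r$ for at least one $\bold{x}\in\mathcal{X}$, the minimization splits as $\min_{\bold{x}\in\mathcal{X}}\min_{\bold{c}}\{\sum_i\alpha_{i,c_i}:\sum_{i:x_i=1}c_i\ge r,\ \bold{c}\in[0,K]^n\}$: coordinates with $x_i=0$ contribute their unconstrained minimum $\min_k\alpha_{ik}$, while coordinates with $x_i=1$ form a covering knapsack problem, namely choose $c_i\in\{0,\ldots,K\}$ with $\sum_{i:x_i=1}c_i\ge r$ so as to minimize $\sum_{i:x_i=1}\alpha_{i,c_i}$. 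Because the support is fixed to $\{0,\ldots,K\}$, the partial sums lie in the integer range $[0,nK]$, so this knapsack is solved by a layered dynamic program whose state is the pair (current coordinate $i$, accumulated sum $l\in[0,nK]$). The transition at coordinate $i+1$ either advances the sum by $k=c_{i+1}$ when $x_{i+1}=1$ or leaves it unchanged when $x_{i+1}=0$; this is exactly the recursion governing the $\delta_{ikl\bold{x}}$ variables, initialized at the first active coordinate $b_{\bold{x}}$ and terminating on sum levels $l\ge r$. The program runs in time polynomial in $K$, $n$ and $P$, which, together with the equivalence of separation and optimization, already yields the pseudopolynomial (weakly NP-hard) computability of $U(r)$ and explains why the dependence on $U_2$ disappears: only the fixed support $[0,K]$ and the bit length $\log_2 U_1$ enter.

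To obtain the explicit compact formulation in the statement rather than an ellipsoid-based one, I would express this dynamic program as a shortest/min-cost flow linear program on the layered DAG, whose constraint matrix is a network (totally unimodular) matrix so that its LP relaxation is integral and computes the knapsack optimum exactly. Dualizing this flow program and substituting it back for the separated family \eqref{constr:1_general}, equivalently writing the extended formulation of the primal directly, produces a polynomial-size linear program in which $a_{\bold{x}}$ is the probability mass assigned to the bucket witnessed by $\bold{x}$, $w$ is the mass on $\{Z<r\}$, the variables $h_{ik}$ and $g_{ik\bold{x}}$ record how the marginal $p_{ik}$ is split across the complement and the inactive coordinates of each bucket, and $\delta_{ikl\bold{x}},\tau_{l\bold{x}}$ carry the flow over the active coordinates; the flow-conservation and boundary equations are then precisely the stated constraints.

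The hard part will be the correctness of this aggregation. I must show that retaining only the per-coordinate marginal masses together with the running-sum flow, rather than the full exponential joint distribution inside each bucket, loses nothing: that every feasible flow can be realized by an actual joint distribution supported on $\{\bold{c}:\bold{c}'\bold{x}\ge r\}$ with the prescribed marginals, so that feasible solutions of the compact program and of the original marginal polytope are in objective-preserving correspondence. This reconstruction step, combined with the totally unimodular integrality of the flow polytope (which guarantees the relaxed running-sum constraint is met exactly), is what forces the compact linear program's optimum to equal $U(r)$, and the polynomial bound on its size in $K$, $n$, $P$ and $\log_2 U_1$ then follows by counting variables and constraints.
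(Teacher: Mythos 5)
Your proposal follows essentially the same route as the paper: pass to the exponential dual with constraints \eqref{constr:1_general}--\eqref{constr:0_general}, observe that \eqref{constr:0_general} separates trivially via $\lambda+\sum_i\min_k\alpha_{ik}\ge 0$, disaggregate \eqref{constr:1_general} over the $P$ extreme points, split each subproblem into the unconstrained coordinates ($x_i=0$) and a multiple-choice covering knapsack over the active coordinates, solve the latter by a dynamic program indexed by the pair (coordinate, partial sum $l\in[0,nK]$), encode that DP as a polynomial-size LP, and dualize everything back to obtain the stated maximization. Two points of divergence are worth noting. First, you encode the DP as a primal shortest-path/min-cost-flow LP on the layered DAG and invoke total unimodularity for exactness; the paper instead uses the potential-form encoding---decision variables $f_{i,l,\bold{x}}, t_{\bold{x}}$ constrained by the recursion inequalities with objective $\max t_{\bold{x}}$---which needs no integrality argument, only the standard fact that this LP's optimum equals the DP value. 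Since the two encodings are LP duals of each other, this difference is cosmetic. Second, and more substantively, the step you single out as ``the hard part''---showing that every feasible solution of the compact program is realizable by a genuine joint distribution with the prescribed marginals---is not needed in the paper's argument. Because every reformulation is carried out as an exact equivalence of the \emph{dual} feasible set (replacing ``$\lambda+G(\boldsymbol{\alpha},\bold{x})\ge 1$'' by ``there exist auxiliary variables $q,t,f$ feasible for the inner max-LP with objective value at least $1-\lambda$''), the compact dual has the same optimum as the exponential dual, and a single application of strong LP duality transfers this value to the compact primal; correctness of the aggregation is automatic, with no reconstruction required. The distribution-reconstruction argument you anticipate is genuinely useful, but only for the separate task of exhibiting an extremal distribution from an optimal solution (as the paper does following Corollary \ref{cor:sums_discrete}), not for proving the theorem itself.
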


\begin{proof}
We derive the LP by reformulating constraints \eqref{constr:1_general} and \eqref{constr:0_general}. \\
\textbf{Step (1):} Reformulating constraints \eqref{constr:1_general}:\\
We can rewrite constraint \eqref{constr:1_general} as:
$\lambda + W(\boldsymbol{\alpha}) \geq 1,$
where $W(\boldsymbol{\alpha})$ is the optimal value of the following 0-1 integer program:
\begin{flalign*}
\begin{array}{rllll}
W(\boldsymbol{\alpha}) = \min  \;& \displaystyle \sum_{i=1}^n \sum_{k =0}^K \alpha_{ik} y_{ik} \\
\mbox{s.t.}&\displaystyle  \max_{\bold{x} \in \mathcal{X}} \sum_{i=1}^n \left(\sum_{k=0}^K ky_{ik}\right)x_i \geq r, \\
& \displaystyle \sum_{k = 0}^K y_{ik} = 1, \text{ for } i \in [n],\\
& \displaystyle y_{ik} \in \{0,1\}, \text{ for } i \in [n], k \in  [0,K].
\end{array}
\end{flalign*}
This is obtained by defining the binary variable $y_{ik}$ as $\mathbbm{1}_{\{c_i = k\}}$.
Towards further simplification,
for any $\bold{x} \in \mathcal{X}$, $\boldsymbol{\alpha} \in \mathbb{R}^{n \times (K+1)}$ and $ r \in [0,nK]$, define $G(\boldsymbol{\alpha}, \bold{x})$ as the optimal value of the following 0-1 integer program:
\begin{align*}
\begin{array}{rllll}
G(\boldsymbol{\alpha}, \bold{x}) = \min  \;&\displaystyle \sum_{i=1}^n \sum_{k=0}^K \alpha_{ik} y_{ik} \\
\mbox{s.t.}& \displaystyle \sum_{i=1}^n \left(\sum_{k=0}^K ky_{ik}\right)x_i \geq r, \\
& \displaystyle \sum_{k = 0}^K y_{ik} = 1, \text{ for } i \in [n], \\
& \displaystyle y_{ik} \in \{0,1\}, \text{ for } i \in [n], k \in [0,K].\\
\end{array}
\end{align*}
Then we have:
\begin{align*}
\lambda + W(\boldsymbol{\alpha}) \geq 1 \iff \lambda + G(\boldsymbol{\alpha}, \bold{x}) \geq 1 \text{ for } \bold{x} \in \mathcal{X}.
\end{align*}
The value $G(\boldsymbol{\alpha}, \bold{x})$ can be rewritten as $\sum_{i=1}^n \sum_{k=0}^K \alpha_{ik} y_{ik}x_i + \sum_{i=1}^n \sum_{k=0}^K\alpha_{ik} y_{ik}(1-x_i) $. For computing $G(\boldsymbol{\alpha}, \bold{x})$, we need to find an optimal assignment from $ [0,K]$ for each $i$ (through the binary variable $\bold{y}$). Let us first focus on the terms in the objective involving indices $i$ where $x_i = 0$. Observe that if $x_i = 0$ for some $i$, we set $y_{ik^*} = 1$ for $ k^* \in \argmin_{k \in  [0,K]} \alpha_{ik}$ at optimality (with ties broken arbitrarily) and $y_{ik} =0$ for all values of $k \neq k^*$. This is clearly optimal since the first constraint $\sum_{i} (\sum_{k} ky_{ik})x_i \geq r$ is unaffected. The contribution made by this assignment to the overall objective $G(\boldsymbol{\alpha}, \bold{x})$ is captured using the following linear program:
\begin{align}
\label{lp:part_2_I}
\begin{array}{rllll}
\displaystyle \max\limits_{\bold{q}} \;&\displaystyle  \left\{\sum_{i=1}^n q_{i,\bold{x}}:  q_{i,\bold{x}}\leq \alpha_{ik}(1-x_i), \text{ for } k \in [0, K], i \in [n]\right\}.
\end{array}
\end{align}
Now let us look at the remaining part of the objective involving indices $i$ where $x_i = 1$. We are in particular interested in solving the integer program:
\begin{flalign}
\label{ip:multiple_choice_knapsack}
\begin{array}{rllll}
 \min  \;&\displaystyle \sum_{i=1}^n \sum_{k=0}^K \alpha_{ik} y_{ik}x_i \\
\mbox{s.t.}&\displaystyle  \sum_{i=1}^n \left(\sum_{k=0}^K ky_{ik}\right)x_i \geq r, \\
&\displaystyle  \sum_{k = 0}^K y_{ik} = 1, \text{ for } i \in [n], \\
&\displaystyle  y_{ik} \in \{0,1\}, \text{ for } i \in [n], k \in [0,K],
\end{array}
\end{flalign}
which is an instance of a multiple choice knapsack problem \cite{kellerer2004multiple}. We next use a dynamic programming reformulation of this problem to develop the linear program.
Let $f_{i,l, \bold{x}}$ denote the optimal value of the subproblem, which only makes optimal assignment for the variables $y_{jk}$ for all $j \in [i]$:
\begin{align*}
\begin{array}{rllll}
f_{i,l, \bold{x}} = \min  \;&\displaystyle \sum_{j=1}^i \sum_{k=0}^K \alpha_{jk} y_{jk}x_j \\
\mbox{s.t.}& \displaystyle \sum_{j=1}^i \left(\sum_{k=0}^K ky_{jk}\right)x_j=l,\\
& \displaystyle \sum_{k = 0}^K y_{jk} = 1, \text{ for } j \in [i], \\
& \displaystyle y_{jk} \in \{0,1\}, \text{ for } j \in [i], k \in [0,K].\\
\end{array}
\end{align*}
We set for each $\bold{x}$, $b_{\bold{x}}$ as the smallest value of the index $i \in [n]$ such that $x_i = 1$.  We must have $f_{b_{\bold{x}},k,\bold{x}} = \alpha_{b_{\bold{x}},k}$ for $k \in [0,K]$.
For $i > b_{\bold{x}}$, if $x_i = 0$, then $f_{i-1,l,\bold{x}}$ gets passed on to $f_{i,l,\bold{x}}$. However if $x_i = 1$, then $f_{i,l,\bold{x}}$ will take the smallest possible value of $f_{i-1,l-k, \bold{x}} + \alpha_{ik}$ out of all possible values of $k \in [0,K]$, and $y_{ik}=1$ for the corresponding $k$. So we have:
\begin{align*}
f_{i,l, \bold{x}} = f_{i-1,l,\bold{x}}(1-x_i)  +\min_{k \in  [0,K]} \left(f_{i-1,l-k, \bold{x}} + \alpha_{ik}\right)x_i .
\end{align*}
Finally, the optimal objective of \eqref{ip:multiple_choice_knapsack} is $\min_{r \leq l \leq nK}f_{n,l,x}$. Putting together the dynamic programming recursion gives us the following linear program:
\begin{align}
\label{lp:multiple_choice_knapsack}
\begin{array}{rllll}
\max \;& t_{\bold{x}} \\
\mbox{s.t.}& f_{n,l,x} - t_{\bold{x}} \geq 0 \text{ for } l \in [r, nK],\\
&( f_{i-1, l-k,\bold{x}} + \alpha_{ik})x_i +  f_{i-1, l,\bold{x}} (1-x_i ) -f_{i,l,\bold{x}}  \geq 0, \text{ for } i \in [2,n] ,\\
 & \;\;\;\;\;\; \;\;\;\;\;\; \;\;\;\;\;\; \;\;\;\;\;\;  k \in [0,K],  l \in [k, nK] ,\\
 & f_{b_{\bold{x}},k,\bold{x}} = \alpha_{b_{\bold{x}},k}\text{ for } k \in [0,K]. \\
\end{array}
\end{align}
Further putting together  \eqref{lp:part_2_I} and \eqref{lp:multiple_choice_knapsack} we reformulate $G(\boldsymbol{\alpha}, \bold{x})$ as:
\begin{align*}
\begin{array}{rllll}
G(\boldsymbol{\alpha}, \bold{x})  = \max &\displaystyle t_{\bold{x}}  + \sum_{i=1}^n q_{i,\bold{x}} \\
\mbox{s.t.}& f_{n,l,x} - t_{\bold{x}} \geq 0 \text{ for } l \in [r, nK], \\
&( f_{i-1, l-k,\bold{x}} + \alpha_{ik})x_i +  f_{i-1, l,\bold{x}} (1-x_i ) -f_{i,l,\bold{x}}  \geq 0, \text{ for } i \in [2,n] ,\\
 & \;\;\;\;\;\; \;\;\;\;\;\; \;\;\;\;\;\; \;\;\;\;\;\;  k \in [0,K],  l \in [k, nK] ,\\
 & f_{b_{\bold{x}},k,\bold{x}} = \alpha_{b_{\bold{x}},k}, \text{ for } k \in [0,K], \\
& (1-x_i)\alpha_{ik} -q_{i,\bold{x}} \geq 0, \text{ for } i \in [n], k \in [0,K].
\end{array}
\end{align*}
Forcing $\lambda + G(\boldsymbol{\alpha}, \bold{x})$ to be greater than $1$, provides the following equivalent reformulation of the constraint \eqref{constr:1_general}:
\begin{align*}
\begin{array}{rllll}
&\displaystyle \lambda + t_{\bold{x}} + \sum_{i=1}^n q_{i,\bold{x}} \geq 1, \text{ for } \bold{x} \in \mathcal{X}, \\
 &\displaystyle  (1-x_i)\alpha_{ik} -q_{i,\bold{x}} \geq 0, \text{ for } \bold{x} \in \mathcal{X}, i \in [1,n], k \in [0,K],\\
& \displaystyle f_{n,l,x} - t_{\bold{x}} \geq 0, \text{ for } l \in [r, nK], \text{ for } \bold{x} \in \mathcal{X} ,\\
&\displaystyle  ( f_{i-1, l-k,\bold{x}} + \alpha_{ik})x_i +  f_{i-1, l,\bold{x}} (1-x_i ) -f_{i,l,\bold{x}}  \geq 0, \text{ for } i \in [2,n] ,\\
 & \;\;\;\;\;\; \;\;\;\;\;\; \;\;\;\;\;\; \;\;\;\;\;\;  k \in [0,K], l \in [k, nK] , \bold{x} \in \mathcal{X}, \\
 & f_{b_{\bold{x}},k,\bold{x}} = \alpha_{b_{\bold{x}},k},\text{ for } k \in [0,K],\bold{x} \in \mathcal{X}.
\end{array}
\end{align*}
\textbf{Step (2):} Reformulating constraints \eqref{constr:0_general}\\
Note that enforcing  \eqref{constr:0_general}  boils down to ensuring:
\small
\begin{align*}
\begin{array}{rlll}
\displaystyle \lambda +  \min \left\{\sum_{i=1}^n \sum_{k=0}^K \alpha_{ik} y_{ik}: \sum_{k=0}^K y_{ik} = 1, \text{ for } i \in [n], y_{ik} \in \{0,1\}, \text{ for } i \in [n], k\in [0,K]\right\} \geq 0.
\end{array}
\end{align*}
\normalsize
It is easy to see that the optimal value of the optimization problem is attained by $y_{ik} = 1$ for $k = \argmin_{k \in [0,K]} \alpha_{ik}$ for all $i \in [n]$. Thus the constraint can be reformulated as:
\begin{align*}
\begin{array}{rlll}
\displaystyle \lambda + \max \left\{\sum_{i=1}^n v_i: \alpha_{ik} - v_i \geq 0, \text{ for } i \in [1,n], k \in [0,K] \right\} \geq 0.
\end{array}
\end{align*}
Then integrating all the constraints together gives us the following linear program:
\begin{align*}
\begin{array}{rllll}
\min \;\; & \displaystyle \lambda + \sum_{i=1}^n \sum_{k=0}^{K} \alpha_{ik} p_{ik} \\
\mbox{s.t.}&\displaystyle \lambda + t_{\bold{x}} + \sum_{i=1}^n q_{i,\bold{x}} \geq 1, \text{ for } \bold{x} \in \mathcal{X}, \\
 &\displaystyle  (1-x_i)\alpha_{ik} -q_{i,\bold{x}} \geq 0, \text{ for } \bold{x} \in \mathcal{X}, i \in [1,n], k \in [0,K],\\
&\displaystyle  f_{n,l,x} - t_{\bold{x}} \geq 0, \text{ for } l \in [r, nK], \text{ for } \bold{x} \in \mathcal{X} ,\\
&\displaystyle ( f_{i-1, l-k,\bold{x}} + \alpha_{ik})x_i +  f_{i-1, l,\bold{x}} (1-x_i ) -f_{i,l,\bold{x}}  \geq 0, \text{ for } i \in [2,n] ,\\
 &\displaystyle  \;\;\;\;\;\; \;\;\;\;\;\; \;\;\;\;\;\; \;\;\;\;\;\;  \text{ for }k \in [0,K], l \in [k, nK] ,\bold{x} \in \mathcal{X}, \\
 &\displaystyle  f_{b_{\bold{x}},k,\bold{x}} = \alpha_{b_{\bold{x}},k},\text{ for } k \in [0,K], \bold{x} \in \mathcal{X}, \\
 &\displaystyle  \lambda + \sum_{i=1}^n v_i \geq 0,\\
 &\displaystyle  \alpha_{ik} - v_i \geq 0, \text{ for } i \in [1,n], k \in [0,K] .
 \end{array}
\end{align*}
Taking the dual of this linear program gives us the tight reformulation in the theorem.
\end{proof}
The linear program has a total $O(n^2K^2P)$ variables and $O(n^2 K^2 P)$ constraints. When $P$ is polynomial in $n$, this is a polynomial sized linear program in comparison to the original primal linear program which has $O(K^n)$ variables.
We now consider an application of this bound to the sum of random variables.
\subsection{Application to Sum of Random Variables}\label{subsec:discretesums}
The computation of probability bounds for the sum of dependent random variables has received much attention in the literature. In particular, there have been many upper and lower bounds developed with general marginal distributions (discrete or continuous) in the works of \cite{ruschendorff1982,embrechts2006bounds,Puccetti2012,wang2013bounds,wang_2014,blanchet2021convolution} and the references therein. These bounds are typically generated by choosing appropriate dual feasible solutions and are guaranteed to be tight in special cases \cite{ruschbook3}. Given the hardness results for computing these bounds, it is of interest to find instances where the tight bounds are computable in polynomial time.

We now discuss the application of Theorem \ref{thm:probability_bound_general_discrete} to computing bounds for sums of dependent random variables with discrete marginal distributions.
Let $S(r,K)$ denote the following probability bound:
\small
\begin{align*}
S(r,K) = \max\left\{\mathbb{P}_{\theta}\left(\sum_{i=1}^n \tilde{c}_i \geq r\right):\mathbb{P}_{\theta}(\tilde{c}_i = k) = p_{ik}, \text{ for } k \in [K],i \in [n], 
\theta \in \mathbb{P} ([0,K]^n)\right\}.
\end{align*}
\normalsize
For the case of Bernoulli random variables with $K = 1$ where $p_{i0} = 1-p_i$ and $p_{i1} = p_i$, the tightest upper bound for $r = 1$ is given by Boole's union bound:
$$\displaystyle S(1,1) = \min\left(\sum_{i=1}^{n}p_i,1\right).$$
For more general values of $r \in [n]$, the tightest upper bound for the sum of dependent Bernoulli random variables was computed in closed form by \cite{ruger1978}:
\begin{align}
\label{eqn:univar-bound}
\displaystyle S(r,1) = \min \left( \left(\min_{t \in [0,r-1]} \sum_{i=1}^{n-t} \frac{p_{(i)}}{r-t} \right), 1 \right),
\end{align}
where the marginal probabilities $p_1,p_2,\ldots,p_n$ are ordered as $p_{(1)} \leq p_{(2)} \leq \ldots \leq p_{(n)}$. For the sum of discrete random variables with support in $[0,K]$, directly applying Theorem \ref{thm:probability_bound_general_discrete} brings us to the following corollary which shows that the tightest bound is computable in polynomial time. This adds to the stream of literature on identifying instances where the tightest upper bound is computable in polynomial time.

\begin{corollary}
\label{cor:sums_discrete}
Given the marginal distributions of the random vector $\tilde{\bold{c}}$ as $\mathbb{P}(\tilde{c}_i = k) = p_{ik}$ for $k \in [0,K]$ and $i \in [n]$, the tightest upper bound on the sum exceeding a value $r$ is computable by solving the linear program:
\begin{align*}
\begin{array}{rlll}
\displaystyle S(r,K) = \max \;& \displaystyle a \\
\mbox{s.t.}& \displaystyle a + b = 1, \\
&\displaystyle h_{ik} + \sum_{l=k}^{nK} \delta_{i,k,l} = p_{ik}, \text{ for } i \in [n], k \in [0, K],\\
&\displaystyle  \sum_{k=0}^K h_{ik} = b, \text{ for } i \in [n], \\
&\displaystyle  a = \sum_{l=r}^{nK} \tau_{l},\\
& \displaystyle \tau_l = \sum_{k=0}^{\min(K,l)} \delta_{n,k,l}, \text{ for } l \in [r, nK],\\
& \displaystyle \delta_{n,k,l} = 0, \text{ for } l \in [0, r-1], k \in [0, \min(K,l)] ,\\
&\displaystyle \delta_{1,k,k} = \sum_{k'=0}^{\min(K,nK-k)} \delta_{2,k',k'+k}, \text{ for } k \in [0,K], \\
&\displaystyle  \delta_{2,k,k'+k} = 0, \text{ for } k \in [0,K], k' \in [K+1, nK - k], \\
&\displaystyle  \sum_{k=0}^{\min(K,l)}\delta_{i,k,l}= \sum_{k' =0}^{\min(K, nK-l)} \delta_{i+1, k', l+k'}, \\
&\;\;\;\;\;\;\;\;\;\;\;\text{ for } i \in [2,n-1], l \in [0, nK],\\
&\displaystyle  a,b, \bold{h}, \boldsymbol{\delta}, \boldsymbol{\tau} \geq 0.
\end{array}
\end{align*}
\end{corollary}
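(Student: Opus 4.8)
The plan is to derive the corollary directly from Theorem~\ref{thm:probability_bound_general_discrete} by specializing to the single extreme point $\bold{x} = \bold{1}_n$, so that $Z(\tilde{\bold{c}}) = \sum_i \tilde{c}_i$ and $P = 1$. First I would substitute $x_i = 1$ for every $i$ into the linear program of the theorem, drop the now-redundant index $\bold{x}$, and rename $a_{\bold{1}_n} \to a$, $w \to b$, $\delta_{ikl\bold{1}_n} \to \delta_{i,k,l}$, and $\tau_{l\bold{1}_n} \to \tau_l$, leaving $h_{ik}$ unchanged.

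The key simplification is that with $x_i = 1$ the factor $(1-x_i)$ annihilates every term carrying $g_{ik\bold{x}}$ in the balance equation $h_{ik} + \sum_{\bold{x}} g_{ik\bold{x}}(1-x_i) + \sum_{l}\delta_{ikl\bold{x}} x_i = p_{ik}$, which collapses to $h_{ik} + \sum_{l=k}^{nK}\delta_{i,k,l} = p_{ik}$. The constraints $\sum_k g_{ik\bold{x}} = a_{\bold{x}}$ then decouple entirely: the $g$ variables appear in no objective and in no other constraint, so they can be set to any nonnegative values summing to $a$ and simply deleted. Since the smallest index with $x_i = 1$ is $b_{\bold{x}} = 1$, the base constraint $f_{b_{\bold{x}},k,\bold{x}} = \alpha_{b_{\bold{x}},k}$ becomes the $i=1$ layer, producing $\delta_{1,k,k} = \sum_{k'}\delta_{2,k',k'+k}$.

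Next I would specialize the general transition. With $x_{i+1} = 1$ the recursion $\sum_k \delta_{ikl\bold{x}} = \sum_k \delta_{i+1,k,l+k,\bold{x}} x_{i+1} + \sum_k \delta_{i+1,k,l,\bold{x}}(1-x_{i+1})$ loses its second sum and reduces to the layered balance $\sum_{k=0}^{\min(K,l)}\delta_{i,k,l} = \sum_{k'=0}^{\min(K,nK-l)}\delta_{i+1,k',l+k'}$ for $i \in [2,n-1]$, while the terminal layer $i=n$ feeds into $\tau_l = \sum_{k}\delta_{n,k,l}$ with $a = \sum_{l=r}^{nK}\tau_l$. The extra constraints $\delta_{n,k,l} = 0$ for $l \in [0,r-1]$ and $\delta_{2,k,k'+k} = 0$ for $k' \in [K+1,nK-k]$ then record the boundary conventions of the dynamic program: partial sums below the threshold $r$ carry no mass at the last layer, and a single item contributes at most $K$ to the accumulated value.

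The main obstacle I expect is purely the index bookkeeping in the dynamic-programming layer: I must verify that the state variable $l$, which tracks the partial sum $\sum_{j \le i}\tilde{c}_j$, ranges correctly over $[0,nK]$ and that the $\delta$ variables forced to zero at the two boundaries exactly reproduce the reachable-state set of the multiple-choice knapsack recursion from the theorem, which here degenerates to a pure assignment problem with no $\max_{\bold{x}\in\mathcal{X}}$ over extreme points. Once this correspondence is checked term by term, taking the dual of the simplified program, exactly as in the theorem, yields the stated linear program for $S(r,K)$.
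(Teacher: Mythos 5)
Your proposal is correct and matches the paper's own route: the paper derives Corollary~\ref{cor:sums_discrete} precisely by ``directly applying'' Theorem~\ref{thm:probability_bound_general_discrete} with $\mathcal{X}=\{\bold{1}_n\}$ (so $P=1$, $b_{\bold{x}}=1$, and $Z(\tilde{\bold{c}})=\sum_i \tilde{c}_i$), which is exactly your specialization, including the elimination of the decoupled $g$ variables and the collapse of the DP recursion to the layered balance constraints.
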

Next we describe the construction of the extremal distribution using the optimal solution of the linear program in Corollary \ref{cor:sums_discrete}. Given an optimal solution of the linear program denoted by $a^*,b^*, \bold{h}^*, \boldsymbol{\delta}^*, \boldsymbol{\tau}^*$, an extremal distribution is constructed using the following mixture distribution:
\begin{enumerate}
\item Generate a Bernoulli random variable $\tilde{z}$ with probability $a^*$.
\item If $\tilde{z} = 1$,
\begin{enumerate}
\item Generate $\tilde{c}_1 = k$  with probability $\delta_{1,k,k}/a$.
\item For each $i$ in $[2,n]$, generate $\tilde{c}_i$ as follows:
$$\displaystyle \mathbb{P}\left(\tilde{c}_i = k \ \Big{|} \ \sum_{j=1}^i \tilde{c}_i = l\right) =  \frac{\displaystyle \delta_{i,k,l+k}}{\displaystyle \sum_{k' \in  [0,K]}\delta_{i-1,k',l}}, \text{ for } l \in [0, iK].$$
\end{enumerate}
\item If $\tilde{z} = 0$, generate $\tilde{c}_i = k$ with probability $h_{ik}/b$ independently across all $i \in [n]$.
\end{enumerate}
It is straightforward to check that $\theta^{*}$ is the extremal distribution where the optimal decision variables can be interpreted as: $a^* = \mathbb{P}_{\theta^*}(\sum_{i} \tilde{c}_i \geq r)$, $b^* = \mathbb{P}_{\theta^*}(\sum_{i} \tilde{c}_i < r)$.
Additionally, $h_{ik}^* =  \mathbb{P}_{\theta^*}(\tilde{c}_i = k, \sum_{j=1}^n \tilde{c}_j < r)$, $\tau_l^* = \mathbb{P}_{\theta^*}(\sum_{i=1}^n \tilde{c}_i \geq r, \sum_{i=1}^n \tilde{c}_i = l)$ and $\delta_{i,k,l} = \mathbb{P}_{\theta^*}(\tilde{c}_i = k, \sum_{j=1}^i \tilde{c}_i = l, \sum_{l=1}^n \tilde{c}_l = n)$.

\subsubsection{Reduced Formulations for Bernoulli Random Variables}
In the scenario where the random variables take support in $\{0,1\}$, we show that the size of the linear program in Theorem \ref{thm:probability_bound_general_discrete} can be reduced by employing an alternative approach to tackle the separation problem:
\begin{align}
\label{constr:dual_separation}
\min \left\lbrace \sum_{i=1}^n \alpha_i c_i: Z(\bold{c}) \geq r, \bold{c} \in \{0,1\}^n  \right \rbrace.
\end{align}

\begin{theorem}
\label{thm:general_binary_support}
Let ${\cal X} = \{\bold{x}^1, \ldots, \bold{x}^P \} \subseteq \{0,1\}^n$. Given the marginal distributions of the Bernoulli random vector $\tilde{\bold{c}}$ as $\mathbb{P}(\tilde{c}_i = 1) = 1-\mathbb{P}(\tilde{c}_i = 0) = p_{i}$ for $i \in [n]$, the tightest upper bound is computable by solving the linear program:
\begin{align*}
\begin{array}{rllll}
U(r)= \max \;\; & \displaystyle \sum_{\bold{x} \in \mathcal{X}} a_{\bold{x}}\\
\mbox{s.t} \;\;&\displaystyle  \sum_{\bold{x} \in \mathcal{X}} a_{\bold{x}} + b = 1, \\
&\displaystyle  h_i + \sum_{\bold{x} \in \mathcal{X}} g_{i, \bold{x}} = p_i, \text{ for } i \in [n],\\
&\displaystyle  h_i \leq b, \text{ for } i \in [n],\\
&\displaystyle   g_{i, \bold{x}} \leq  a_{\bold{x}}, \text{ for } i \in [n], \bold{x} \in \mathcal{X},\\
&\displaystyle  r a_{\bold{x}} - \sum_{i: x_i=1}  g_{i, \bold{x}} \leq 0, \text{ for } \bold{x} \in \mathcal{X},\\
&\displaystyle  \bold{a} \geq 0, b \geq 0, \bold{g} \geq 0, \bold{h} \geq 0.
\end{array}
\end{align*}
\end{theorem}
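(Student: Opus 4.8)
The plan is to mirror the derivation of Theorem \ref{thm:probability_bound_general_discrete}, but to exploit the fact that for Bernoulli marginals the separation problem \eqref{constr:dual_separation} collapses from a multiple-choice knapsack into a pure cardinality-selection problem. Starting from the dual linear program \eqref{constr:1_general}--\eqref{constr:0_general}, I would first specialize it to the binary support. Setting $c_{i0}=0$ and $c_{i1}=1$ and absorbing the constants $\alpha_{i0}$ into $\lambda$, only the variables $\alpha_i:=\alpha_{i1}$ survive, the objective becomes $\lambda+\sum_{i}\alpha_i p_i$, and the left-hand side of each constraint reads $\lambda+\sum_{i:c_i=1}\alpha_i$. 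Constraint \eqref{constr:0_general} then reduces, by minimizing $\sum_{i:c_i=1}\alpha_i$ freely over $\bold{c}\in\{0,1\}^n$, to the single requirement $\lambda+\sum_{i}\min(\alpha_i,0)\geq 0$, exactly as in Step~(2) of the previous proof.

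For constraint \eqref{constr:1_general} I would write it as $\lambda+W(\boldsymbol{\alpha})\geq 1$ with $W(\boldsymbol{\alpha})$ the optimal value of \eqref{constr:dual_separation}, and decompose over the extreme points as $\lambda+G(\boldsymbol{\alpha},\bold{x})\geq 1$ for every $\bold{x}\in\mathcal{X}$, where
\begin{align*}
G(\boldsymbol{\alpha},\bold{x})=\min\left\{\textstyle\sum_{i}\alpha_i c_i:\ \sum_{i:x_i=1}c_i\geq r,\ \bold{c}\in\{0,1\}^n\right\}.
\end{align*}
The key simplification is that, because $Z(\bold{c})=\max_{\bold{x}\in\mathcal{X}}\sum_i c_i x_i$ merely counts the overlap between the ones of $\bold{c}$ and of $\bold{x}$, the indices with $x_i=0$ decouple and contribute $\sum_{i:x_i=0}\min(\alpha_i,0)$, while the indices with $x_i=1$ give a cardinality-constrained selection. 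I would then invoke the integrality of the polytope $\{\bold{c}\in[0,1]^{n}:\sum_{i:x_i=1}c_i\geq r\}$ to replace the $0$--$1$ selection by its linear relaxation and dualize it, obtaining
\begin{align*}
G(\boldsymbol{\alpha},\bold{x})=\sum_{i:x_i=0}\min(\alpha_i,0)+\max_{\beta_{\bold{x}}\geq 0}\Big(r\beta_{\bold{x}}-\sum_{i:x_i=1}\max(\beta_{\bold{x}}-\alpha_i,0)\Big).
\end{align*}

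Substituting both reformulations back, I would assemble a single minimization linear program in the variables $\lambda$, $\boldsymbol{\alpha}$, $\boldsymbol{\beta}$ together with epigraph variables $\nu_{i,\bold{x}}=\max(\beta_{\bold{x}}\mathbbm{1}_{\{x_i=1\}}-\alpha_i,0)$ and $\mu_i=\max(-\alpha_i,0)$ that linearize the inner maxima; this records \eqref{constr:1_general} as $\lambda-\sum_i\nu_{i,\bold{x}}+r\beta_{\bold{x}}\geq 1$ with $\nu_{i,\bold{x}}\geq \beta_{\bold{x}}\mathbbm{1}_{\{x_i=1\}}-\alpha_i$, and \eqref{constr:0_general} as $\lambda-\sum_i\mu_i\geq 0$ with $\mu_i\geq-\alpha_i$. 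Finally I would take the linear-programming dual of this compact program. Matching the dual variables $a_{\bold{x}},b,g_{i,\bold{x}},h_i$ to the constraint families and reading off the resulting dual constraints produces exactly the program in the statement, and strong duality identifies its optimal value with $U(r)$; the probabilistic reading ($a_{\bold{x}}$ as the mass of $Z\geq r$ witnessed by $\bold{x}$, $g_{i,\bold{x}}$ as $\mathbb{P}(\tilde{c}_i=1$ under that witness$)$, $b$ and $h_i$ as the corresponding quantities on $\{Z<r\}$) confirms the constraints $g_{i,\bold{x}}\leq a_{\bold{x}}$, $h_i\leq b$ and $r a_{\bold{x}}\leq\sum_{i:x_i=1}g_{i,\bold{x}}$. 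The main obstacle is the middle step: justifying that the cardinality-selection relaxation is exact and carrying the two nested dualizations through with correct signs so that the min--max structure collapses cleanly into the stated linear program.
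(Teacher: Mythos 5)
Your proposal is correct and follows essentially the same route as the paper's proof: disaggregate constraint \eqref{constr:1_general} over the extreme points, use the total unimodularity (integrality) of $\{\bold{c}\in[0,1]^n:\bold{c}^\top\bold{x}\geq r\}$ to pass to the linear relaxation, dualize the inner linear programs (your $\beta_{\bold{x}}$, $\nu_{i,\bold{x}}$, $\mu_i$ are the paper's $\Delta_{\bold{x}}$, $\gamma_{i,\bold{x}}$, $\eta_i$), and take the dual of the assembled compact program to obtain the stated LP. Your explicit separation of the indices with $x_i=0$ before relaxing is only a cosmetic difference from the paper, which relaxes the whole problem at once and recovers the same contribution $\sum_{i:x_i=0}\min(\alpha_i,0)$ through the dual variables.
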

\begin{proof}
Constraint \eqref{constr:0_general} in the exponential sized dual linear program for Bernoulli random variables can be rewritten as follows:
\begin{align*}
\begin{array}{rllll}
&\displaystyle \lambda +\min \left\lbrace \sum_{i=1}^n \alpha_i c_i : c_i \in \{0,1\}^n \right\rbrace \geq 0\\
\iff&\displaystyle   \lambda +\min \left\{ \sum_{i=1}^n \alpha_i c_i : 0 \leq c_i \leq 1,\text{ for } i \in [n] \right\} \geq 0 \\
\iff & \displaystyle  \lambda +\max \left\{  \sum_{i=1}^n -\eta_i : \alpha_i + \eta_i \geq 0,\text{ for } i \in [n], \boldsymbol{\eta} \geq 0\right\} \geq 0 \\ 
 \iff & \displaystyle  \lambda + \sum_{i=1}^n -\eta_i  \geq 0, \alpha_i + \eta_i \geq 0,\text{ for } i \in [n], \boldsymbol{\eta} \geq 0, 
\end{array}
\end{align*}
where the first equivalence follows from the 0/1 extreme points of the unit hypercube and the second equivalence is from linear programming duality.
Constraint \eqref{constr:1_general} can be rewritten as follows,
\begin{align}
\begin{array}{rllll}
 & \displaystyle \min \left\lbrace \sum_{i=1}^n \alpha_i c_i:   \max_{\bold{x} \in \mathcal{X}} \bold{c}^\top \bold{x} \geq r,  \bold{c} \in \{0,1\}^n  \right\rbrace  \geq 1 - \lambda  \nonumber \\
 \iff  & \displaystyle \min  \left\lbrace\sum_{i=1}^n \alpha_i c_i  :  \bold{c}^\top \bold{x} \geq r,  \bold{c} \in \{0,1\}^n \right\rbrace \geq 1 - \lambda,  \text{ for } \bold{x} \in \mathcal{X}\nonumber \\
\iff &\displaystyle  \min  \left\lbrace\sum_{i=1}^n \alpha_i c_i  :  \bold{c}^\top \bold{x} \geq r,  0\leq c_i \leq 1,  \text{ for }  i \in [n] \right\rbrace \geq 1 - \lambda,  \text{ for } \bold{x} \in \mathcal{X}, \nonumber
\end{array}
\end{align}
where the first equivalence is by disaggregating the constraints and the second equivalence follows from the observation that the for each $\bold{x} \in \mathcal{X} \subseteq \{0,1\}^n$, the constraint  $\bold{c}^\top \bold{x} \geq r$  has a totally unimodular structure. Note that while this totally unimodular structure arises with binary support, it breaks down for more general discrete support.
Further dualizing the linear program for each $\bold{x} \in \mathcal{X}$ and enforcing the constraints gives the equivalent reformulation:
\begin{align*}
\begin{array}{rllll}
 &\displaystyle \lambda + r\Delta_{\bold{x}} - \sum_{i=1}^n \gamma_{i,\bold{x}} \geq 1, \text{ for } \bold{x} \in \mathcal{X},\\
&\displaystyle  \alpha_i - \Delta_{\bold{x}}x_i + \gamma_{i,\bold{x}} \geq 0, \text{ for } i \in [n], \bold{x} \in \mathcal{X}, \\
&\displaystyle \Delta_{\bold{x}} \geq 0, \text{ for } \bold{x} \in \mathcal{X},
\gamma_{i,\bold{x}} \geq 0  \text{ for } i \in [n], \bold{x} \in \mathcal{X}.
\end{array}
\end{align*}
 Putting the reformulations together in place of the dual constraints \eqref{constr:0_general} and \eqref{constr:1_general} in the exponential sized dual linear program gives:
\begin{align*}
\begin{array}{rllll}
\min \;\;&\displaystyle \lambda + \sum_{i=1}^n \alpha_i p_i \\
\mbox{s.t.}&\displaystyle \lambda + \sum_{i=1}^n -\eta_i  \geq 0, \\
 &\displaystyle \alpha_i + \eta_i \geq 0,  \text{ for } i \in [n], \\
 &\displaystyle \lambda + r\Delta_{\bold{x}} - \sum_{i=1}^n \gamma_{i,\bold{x}} \geq 1, \text{ for } \bold{x} \in \mathcal{X},\\
&\displaystyle  \alpha_i - \Delta_{\bold{x}}x_i + \gamma_{i,\bold{x}} \geq 0, \text{ for } i \in [n], \bold{x} \in \mathcal{X}, \\
&\displaystyle \Delta_{\bold{x}} \geq 0, \text{ for } \bold{x} \in \mathcal{X},
\gamma_{i,\bold{x}} \geq 0,  \text{ for } i \in [n],  \bold{x} \in \mathcal{X} ,\eta_i \geq 0,  \text{ for } i \in [n].
\end{array}
\end{align*}
Taking the dual of the linear program gives us  the formulation in the theorem.
\end{proof}
This linear program has $O(nP)$ variables and $O(nP)$ constraints. In comparison, the linear program in \Cref{thm:probability_bound_general_discrete} applied to Bernoulli random variables  has $O(n^2P)$ variables and $O(n^2P)$ constraints. Next we describe the construction of the extremal distribution using the optimal solution of the linear program in Theorem \ref{thm:general_binary_support}. Given an optimal solution of the linear program denoted by $\bold{a}^*, b, \boldsymbol{g}^*, \boldsymbol{h}^*$, an extremal distribution is constructed using the following mixture distribution:
\begin{enumerate}
\item Generate a Bernoulli random variable $\tilde{z} = 1$ with probability $\sum_{\bold{x} \in \mathcal{X}} a_{\bold{x}}^*$.
\item If $\tilde{z} = 1$,
\begin{enumerate}
\item Generate $\bold{x} \in \mathcal{X}$ with probability $a_{\bold{x}}^*/\sum_{\bold{x} \in \mathcal{X}} a_{\bold{x}}^*$.
\item For each $i \in [n]$, generate $\tilde{c}_i = 1$ with probability $g_{i,\bold{x}}/a_{\bold{x}}^*$ and $\tilde{c}_i = 0$ otherwise..
\end{enumerate}
\item If $\tilde{z} = 0$, for $i \in [n]$, generate $\tilde{c}_i = 1$ with probability $h_i^*/b$.
\end{enumerate}

\subsubsection{Weighted Probability Bounds}\label{subsec:weighted}
 In this section, we show that the results in \Cref{cor:sums_discrete} can be extended to compute tight weighted probability bounds of sums of discrete random variables as the optimal value of a compact linear program. Such bounds are useful in modeling scenarios where some of the variables are extremally dependent (assuming only knowledge of the marginal distributions), while the rest are mutually independent and the two sets of variables are independent of each other (see \Cref{subsec:limiteddependence} for a numerical example). We can thus offset the inherent conservatism in the extremally dependent and mutually independent models by introducing  a limited degree of independence into the model. Denote by $\bold{w}=(w_1,w_2, \dots, w_{nK}), \; w_i \in \mathbb{R}, \;  i \in [nK]$ a vector of pre-specified weights. We are interested in computing the following tight upper bound on the weighted sum of the tail probabilities
$$\underset{\theta \in \Theta}{\max}\;\sum_{l=0}^{nK} w_l \mathbb{P}_{\theta}\left(\sum_{i=1}^n \tilde{c}_i \geq  l\right).$$
Note that without loss of generality, we can ignore $\ell=0$ and consider $\mathbb{P}_{\theta}(\sum_{i=1}^n \tilde{c}_i =  l)$ for $\ell \in [nK]$ instead of tail probabilities by a suitable transformation of weights.
Denote by $S(\bold{w},K)$ the following upper bound:
\begin{align*}
S(\bold{w},K) = \underset{\theta \in \Theta}{\max}\;\sum_{l=1}^{nK} w_l \mathbb{P}_{\theta}\left(\sum_{i=1}^n \tilde{c}_i =  l\right),
\end{align*}
where we are given the marginal distributions of the discrete random vector $\tilde{\bold{c}}$ as $\mathbb{P}(\tilde{c}_i = k) = p_{ik}$ for $k \in [0,K]$ and $i \in [n]$.
We next prove the result for sums of Bernoulli random variables ($K=1$) which can then be extended to sums of discrete variables with $K \geq 2$.

\begin{theorem} \label{thm:weightedLPBernoulli}
Given the marginal distributions of a Bernoulli random vector $\tilde{\bold{c}}$ as $\mathbb{P}(\tilde{c}_i = 1) = 1-\mathbb{P}(\tilde{c}_i = 0)=p_{i}$ for $i \in [n]$, the tightest upper bound $S(\bold{w},1)$  is computable by solving the linear program:
\begin{equation}\label{opt:primalsumweightcompact}
\begin{array}{llll}
S(\bold{w},1) =&\max & \displaystyle\sum_{l=0}^n \tau_{l}w_{l} \\
&\mbox{s.t.} & \displaystyle\sum_{l=0}^n  \tau_{l}=1,&\\
&&\displaystyle\sum_{l=0}^n  \delta_{li}=p_{i},& \;\mbox{for}\; i \in [n], \\
&&\tau_{l}\geq \delta_{li},& \;\mbox{for}\; i \in [n],\;\mbox{for}\; l \in [n],\\
&&\displaystyle\sum_{i=1}^n   \delta_{li}= l\tau_{l},& \;\mbox{for}\; l \in [n], \\
&&\tau_{l}\geq 0,&\;\mbox{for}\; l \in [n],\\
&&\delta_{li}\geq 0, &\;\mbox{for}\; i \in [n],\;\mbox{for}\; l \in [n].\\
\end{array}
\end{equation}
\end{theorem}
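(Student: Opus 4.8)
The plan is to prove the two inequalities $S(\bold{w},1) \le \mathrm{LP}$ and $S(\bold{w},1) \ge \mathrm{LP}$ separately, where $\mathrm{LP}$ denotes the optimal value of \eqref{opt:primalsumweightcompact}, by giving the decision variables a probabilistic meaning and then showing that the resulting constraints are both necessary and sufficient. Concretely, for a joint distribution $\theta \in \Theta$ I would set $\tau_l = \mathbb{P}_\theta(\sum_{i} \tilde{c}_i = l)$ and $\delta_{li} = \mathbb{P}_\theta(\tilde{c}_i = 1, \sum_{j} \tilde{c}_j = l)$, so that the LP is exactly the projection of the (exponential-sized) space of joint distributions onto these aggregate coordinates.

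First, for the relaxation direction, I would verify that every $\theta \in \Theta$ yields a feasible point of \eqref{opt:primalsumweightcompact} with identical objective value. The normalization $\sum_l \tau_l = 1$ and the marginal identity $\sum_l \delta_{li} = \mathbb{P}_\theta(\tilde{c}_i = 1) = p_i$ are immediate, while $\tau_l \ge \delta_{li}$ holds since $\{\tilde{c}_i = 1, \sum_j \tilde{c}_j = l\} \subseteq \{\sum_j \tilde{c}_j = l\}$. The crucial equality $\sum_i \delta_{li} = l\tau_l$ follows by observing that on the event $\{\sum_j \tilde{c}_j = l\}$ one has $\sum_i \tilde{c}_i = l$, hence $\sum_i \delta_{li} = \mathbb{E}_\theta[(\sum_i \tilde{c}_i)\mathbbm{1}_{\{\sum_j \tilde{c}_j = l\}}] = l\,\mathbb{P}_\theta(\sum_j \tilde{c}_j = l) = l\tau_l$. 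Since the objective $\sum_l w_l \tau_l$ is exactly the quantity defining $S(\bold{w},1)$, this gives $S(\bold{w},1) \le \mathrm{LP}$.

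For the reverse direction I would construct, from any feasible $(\boldsymbol\tau,\boldsymbol\delta)$, an explicit $\theta \in \Theta$ attaining the same objective, realized as a mixture: draw $l$ with probability $\tau_l$, and conditional on $l$ sample a $0/1$ vector with \emph{exactly} $l$ ones whose $i$-th coordinate has inclusion probability $q_i^{(l)} = \delta_{li}/\tau_l$ (for $\tau_l > 0$). The constraint $\tau_l \ge \delta_{li}$ guarantees $q_i^{(l)} \in [0,1]$ and the constraint $\sum_i \delta_{li} = l\tau_l$ guarantees $\sum_i q_i^{(l)} = l$. A short computation then recovers $\mathbb{P}_\theta(\tilde{c}_i = 1) = \sum_l \tau_l q_i^{(l)} = \sum_l \delta_{li} = p_i$ and $\mathbb{P}_\theta(\sum_i \tilde{c}_i = l) = \tau_l$, so $\theta \in \Theta$ and the objective matches, yielding $S(\bold{w},1) \ge \mathrm{LP}$. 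Combining the two inequalities gives $S(\bold{w},1) = \mathrm{LP}$, and this mixture simultaneously serves as the explicit extremal distribution.

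The main obstacle is the \emph{realizability} of the conditional layer: I must show that for any $\bold{q} \in [0,1]^n$ with $\sum_i q_i = l$ an integer, there exists a distribution over $0/1$ vectors having exactly $l$ ones whose coordinatewise inclusion probabilities equal $\bold{q}$. This is precisely the statement that the polytope $\{\bold{q} \in [0,1]^n : \sum_i q_i = l\}$ (the hypersimplex) has extreme points equal to the indicator vectors of the $l$-element subsets of $[n]$; any feasible $\bold{q}$ is then a convex combination of such indicators, and that convex combination is the required conditional distribution. I would establish this characterization as the key lemma, either directly from total unimodularity of the defining system or by citing it as a standard fact. This is exactly where the Bernoulli structure $K=1$ is essential, since the clean "$\sum_i q_i = l$" description of the feasible inclusion vectors relies on binary support and would fail for general discrete marginals.
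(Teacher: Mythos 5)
Your proof is correct, but it takes a genuinely different route from the paper's. The paper works on the dual side: it writes $S(\bold{w},1)$ as an exponential-sized linear program over joint distributions, passes to its dual whose $2^n$ constraints are grouped by the level $l = \sum_i c_i$, replaces each group's separation problem $\min\{\sum_i \alpha_i c_i : \bold{c} \in \{0,1\}^n, \sum_i c_i = l\}$ by its linear relaxation over the hypersimplex (valid by total unimodularity of the constraint matrix), dualizes that inner program to obtain a polynomial-sized dual, and finally dualizes once more to arrive at \eqref{opt:primalsumweightcompact}. You instead work entirely on the primal side: you interpret $\tau_l$ and $\delta_{li}$ as the aggregate probabilities $\mathbb{P}_\theta(\sum_j \tilde{c}_j = l)$ and $\mathbb{P}_\theta(\tilde{c}_i = 1, \sum_j \tilde{c}_j = l)$, check that every $\theta \in \Theta$ projects to a feasible point (the identity $\sum_i \delta_{li} = l\tau_l$ being the only nontrivial verification), and conversely lift any feasible $(\boldsymbol{\tau},\boldsymbol{\delta})$ to a mixture distribution whose conditional layer at level $l$ is supported on $0/1$ vectors with exactly $l$ ones and inclusion probabilities $\delta_{li}/\tau_l$. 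The key lemma you isolate --- that any $\bold{q} \in [0,1]^n$ with $\sum_i q_i = l$ integer is a convex combination of indicators of $l$-element subsets --- is precisely the integrality of the hypersimplex, i.e.\ the same total-unimodularity fact the paper invokes, only deployed on the primal side to construct distributions rather than on the dual side to relax separation constraints; your proof would be complete once that lemma is established or cited, as you indicate. What your route buys: it avoids strong duality entirely (the paper needs it twice, and must also argue existence of optimal solutions to justify replacing the inner maximum by feasibility), and it produces the extremal distribution explicitly as a byproduct, something the paper does only informally and separately (after \Cref{cor:sums_discrete} and \Cref{thm:general_binary_support}). What the paper's route buys: it reuses the same dual-separation machinery as \Cref{thm:probability_bound_general_discrete} and \Cref{thm:general_binary_support}, which is why the generalization to discrete support $S(\bold{w},K)$ and to V-polytopes can be asserted in one sentence, whereas your hypersimplex construction is tied to the Bernoulli case, as you correctly note.
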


\begin{proof}
The tight bound $S(\bold{w},1)$ can be computed as the optimal value of the following exponential sized linear program:
\begin{equation}
\begin{array}{llllll} \label{opt:exposumweight}
&\max &  \displaystyle\sum_{l=0}^n w_l  \underset{\bold{c} \in [0,1]^n:\sum_{t=1}^n c_t = \ell}{\displaystyle \sum}\theta(\bold{c})\\
&\textrm{s.t.} &  \underset {\scriptstyle {\bold{c}\in [0,1]^n:c_i =  1}}{\sum} \theta(\bold{c}) = p_{i} , & \;\mbox{for}\; i \in [n],\\
 & &\quad \underset{\bold{c}\in [0,1]^n}{\sum} \theta(\bold{c})  = 1, \\
 & &  \quad\theta(\bold{c}) \geq 0 & \;\mbox{for}\; \bold{c} \in [0,1]^n.
\end{array}
\end{equation}
An optimal solution of this linear program always exists with a finite optimal value. Note that when $\bold{w}= (\bold{0}_{r-1}, \bold{1}_{n-{r+1}})$ (zeros up to index $r-1$ and ones thereafter), the objective function in \eqref{opt:exposumweight} reduces to the tail probability bounds $S(r,1)$ considered in \Cref{subsec:discretesums}. We next derive a compact reformulation of \eqref{opt:exposumweight} by considering the linear relaxation of its dual separation problem, similar to the proof of \Cref{thm:general_binary_support} with $\mathcal{X}=\{\bold{1}_n\}$. The dual of the linear program \eqref{opt:exposumweight} can be written as:
\begin{equation}
\begin{array}{llll} \label{opt:dualsumweight}
&\min & \displaystyle\sum_{i=1}^n  \alpha_{i}p_{i}+\lambda &\\
&\textrm{s.t.} &\displaystyle\sum_{i=1}^n  \alpha_{i} c_i +\lambda\geq w_{l},& \;\mbox{for}\; \bold{c}\in [0,1]^n: \sum_{i=1}^n c_i =l,\; \;\mbox{for}\; l \in [n].\\
\end{array}
\end{equation}

\noindent The dual linear program \eqref{opt:dualsumweight} has $2^n$ constraints, which can be divided into $n$ sets of $\binom{n}{l}$ constraints for $l \in [n]$. Similar to the steps followed in the derivation of the reduced formulation for Bernoulli variables in \Cref{thm:general_binary_support}, for each $l \in [n]$, the set of $\binom{n}{l}$ constraints corresponding to the scenarios  $\bold{c}\in [0,1]^n: \sum c_i =l$  can be rewritten as follows:
\begin{equation}
\begin{array}{lll}  \label{opt:dualsumweightrelax}
& \lambda+\left\{\min \displaystyle\sum_{i=1}^n  \alpha_{i} c_i: \bold{c}\in  [0,1]^n,\; \sum_{i=1}^n {c}_i = l\right\} \geq w_l, & \;\mbox{for}\; l \in [n] \\
 \iff & \lambda+\left\{\min
\displaystyle\sum_{i=1}^n  \alpha_{i}{c}_i :
 0\leq c_i \leq 1,\;\mbox{for}\; i \in [n],\;
 \displaystyle \sum_{i=1}^n c_i =l
\right\}  \geq w_l, & \;\mbox{for}\; l \in [n] \vspace{0.4cm}\\
 \iff &\lambda+\left\{\displaystyle
\begin{array}{lll}
\max & \displaystyle\sum_{i=1}^n  u_{li}+lv_{l} \\
\textrm{s.t.} &u_{li}+v_{l}\leq\alpha_{i}, &\;\mbox{for}\; i \in [n], \\
&  u_{li}\leq 0,&\;\mbox{for}\; i \in [n],\\
\end{array}\right\}\geq w_l,  &\;\mbox{for}\; l \in [n],
\end{array}
\end{equation}
where the first equivalence follows from the totally unimodular structure of the constraint matrix and the second equivalence is from linear programming duality.
Since an optimal solution to the primal \eqref{opt:exposumweight} exists, by strong duality, the dual \eqref{opt:dualsumweight} must also have an optimal solution. Consequently there must exist a feasible solution to the linear program in the last equivalence of \eqref{opt:dualsumweightrelax} and the constraint sets corresponding to each $l \in [0,n]$  in \eqref{opt:dualsumweight} can be replaced by the following polynomial-sized set of constraints:

\begin{equation}
\left\{
\begin{array}{lll}  \label{dweight5}
&\lambda+{\displaystyle\sum_{i=1}^n  u_{li}+lv_{l} \geq w_l}, &\\
&u_{li}+v_{l}\leq\alpha_{i}, &\;\mbox{for}\; i \in [n],\\
 &u_{li}\leq 0,&\;\mbox{for}\; i \in [n], \\
\end{array}\right\} \text{\parbox{3.5cm}{,\qquad $\;\mbox{for}\; l \in [n].$}}
\end{equation}

\noindent Thus the compact version of the dual \eqref{opt:dualsumweight} can be written as:
\begin{equation}
\begin{array}{llll}  \label{opt:dualsumweightcompact}
&\min & \displaystyle\sum_{i=1}^n  \alpha_{i}p_{i}+\lambda &\\
&\textrm{s.t.} & \lambda-{\displaystyle\sum_{i=1}^n  u_{li} +l	v_{l} \geq w_l},  &\;\mbox{for}\;  l\in [n], \\
&&v_{l}-u_{li}\leq\alpha_{i},& \;\mbox{for}\; i \in [n], \;\mbox{for}\; l \in [n], \\
&&u_{li}\geq 0, & \;\mbox{for}\; i \in [n],\;\mbox{for}\; l \in [n]. \\
\end{array}
\end{equation}
\noindent Finally, dualizing \eqref{opt:dualsumweightcompact} leads to the compact linear program \eqref{opt:primalsumweightcompact} with $O(n^2)$ variables and constraints.
\end{proof}

It is straightforward to generalize the result in \Cref{thm:weightedLPBernoulli} to compute the tight bound on the weighted probability of sums of discrete random variables $S(\bold{w},K)$ by a combination of techniques used in the proofs of \Cref{cor:sums_discrete} and \Cref{thm:weightedLPBernoulli}.
\subsection{Hardness Results for the Lower Bound and Independence}
In this section, we show both $L(r)$ and $I(r)$ are not computable in polynomial time for compact 0/1 V-polytopes unless P = NP. The hardness results are shown using a reduction from the independent set problem in graphs. An independent set in an undirected graph $G = (V,E)$ is a subset of the vertices such that no two vertices are adjacent to one another. The decision and optimization version of this problem are known to be NP-hard while counting the number of independent sets is known to be \#P hard \cite{garey1979computers}. The next theorem shows computing the lower bound $L(r)$ is NP-hard.
\begin{theorem}
\label{thm:hardness_extremal_v}
Let ${\cal X} = \{\bold{x}^1, \ldots, \bold{x}^P \} \subseteq \{0,1\}^n$. Given the marginal distributions of the Bernoulli random vector $\tilde{\bold{c}}$ as $\mathbb{P}(\tilde{c}_i = 1) = 1-\mathbb{P}(\tilde{c}_i = 0) = p_{i}$ for $i \in [n]$, computation of the lower bound $L(r)$ is NP-hard and cannot be computed in time polynomial in the input size unless P = NP.
\end{theorem}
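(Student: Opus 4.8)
The plan is to reduce from the maximum independent set problem on a graph $G=(V,E)$ with $|V|=n$, using an \emph{edge gadget}. I would take $\mathcal{X}=\{\mathbbm{1}_{\{u,v\}} : (u,v)\in E\}\subseteq\{0,1\}^n$ (so $P=|E|$), Bernoulli marginals $\mathbb{P}(\tilde{c}_v=1)=p_v$, and the threshold $r=2$. Since each $\tilde{c}_v\in\{0,1\}$, any realization $\bold{c}$ gives $Z(\bold{c})=\max_{(u,v)\in E}(c_u+c_v)$, so $Z(\bold{c})\geq 2$ exactly when the ``on-set'' $\{v:c_v=1\}$ contains an edge, i.e.\ fails to be an independent set. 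Hence $\mathbbm{1}_{\{Z(\bold{c})\geq 2\}}=1-\mathbbm{1}_{\{\mbox{on-set is independent}\}}$, and
\[ L(2)=\min_{\theta\in\Theta}\mathbb{P}_{\theta}(Z(\tilde{\bold{c}})\geq 2)=1-\max_{\theta\in\Theta}\mathbb{P}_{\theta}(\mbox{on-set is an independent set})=:1-f(\bold{p}). \]
The entire burden of the proof is therefore to show that computing $f(\bold{p})$ is hard.

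The first structural step is the identity $f(\bold{p})=1$ iff $\bold{p}\in\mbox{STAB}(G)$, the convex hull of indicator vectors of independent sets: a joint distribution supported entirely on independent sets exhibits $\bold{p}$ as a convex combination of their indicators, and conversely any such convex combination is realized by the corresponding mixture. I would then specialize to uniform marginals $\bold{p}=p\bold{1}_n$ and use the fact that $p\bold{1}_n\in\mbox{STAB}(G)$ iff $p\leq 1/\chi_f(G)$, where $\chi_f$ is the fractional chromatic number (the breakpoint is the reciprocal of the minimum total weight of independent sets needed to fractionally cover every vertex, using that the empty set lets one pad the weight upward freely). This yields the clean equivalence $L(2)=0\iff p\leq 1/\chi_f(G)$, so the largest $p$ for which the oracle returns $L(2)=0$ is exactly $1/\chi_f(G)$.

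To finish, I would convert this threshold into a Turing reduction: a polynomial-time routine for $L(r)$ (applied to these $r=2$ instances with varying $p$) locates the breakpoint $1/\chi_f(G)$ by binary search — the breakpoint is rational with polynomially bounded denominator, so logarithmically many oracle calls suffice — hence computes $\chi_f(G)$, implying P $=$ NP. A more self-contained quantitative route avoids $\chi_f$ by a first-moment bound: since an independent on-set has at most $\alpha(G)$ elements and any other on-set at most $n$, averaging $\#\{v:c_v=1\}$ gives $f(p\bold{1}_n)\leq \frac{n(1-p)}{n-\alpha(G)}$, from which $\alpha(G)$ can be read off once this is shown tight.

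I expect the main obstacle to be exactly this tightness / exact-value step. The upper bound on $f$ is a one-line moment computation, but matching it requires exhibiting an extremal joint distribution, and the natural candidate — mixing the all-ones outcome with a distribution over maximum independent sets whose per-vertex coverage is the constant $\alpha(G)/n$ — exists only when $\frac{\alpha(G)}{n}\bold{1}_n$ lies in the convex hull of the maximum-independent-set indicators. This holds automatically for vertex-transitive graphs (average over the automorphism group), so the cleanest completion is to run the reduction on a vertex-transitive instance, or to first symmetrize $G$ into a vertex-transitive graph from which $\alpha(G)$ is recoverable. Verifying that such a symmetrization preserves NP-hardness, and that the induced breakpoints have controlled bit-complexity so the binary search runs in polynomial time, is the delicate part of the argument.
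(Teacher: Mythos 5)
Your main route is correct, and it is genuinely different from the paper's argument even though you chose the identical instance family: the paper's proof also uses the edge-incidence gadget $\mathcal{X}=\{\mathbbm{1}_{\{u,v\}}:(u,v)\in E\}$ with $r=2$ and unit weights. The difference is where the hardness is extracted. The paper stays on the dual side: it writes the exponential-sized dual LP for $L(r)$, observes that its separation problem $\max\{\sum_i\alpha_i c_i:\bold{c}'\bold{x}^j\le r-1 \mbox{ for } j\in[P],\ \bold{c}\in\{0,1\}^n\}$ is exactly maximum independent set on this instance (take $\boldsymbol{\alpha}=\bold{1}$, $r=2$), and concludes hardness of $L(r)$ from the equivalence of separation and optimization \cite{grotschel2012geometric}. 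You stay on the primal, distributional side: $L(2)=1-f(\bold{p})$, $f(\bold{p})=1$ iff $\bold{p}\in\mathrm{STAB}(G)$, and for uniform marginals the breakpoint is $1/\chi_f(G)$, so an $L$-oracle pins down the fractional chromatic number. This buys an explicit Turing reduction that never touches the ellipsoid machinery of the dual LP, and it yields a sharper qualitative statement than the paper's: even deciding whether $L(2)=0$ (i.e., whether the lower bound is vacuous) is NP-hard; indeed a single oracle call at $p=1/q$ decides $\chi_f(G)\le q$, so the binary search and the bit-complexity bookkeeping are needed only if you insist on computing $\chi_f(G)$ exactly. (Minor correction there: the breakpoint's denominator is not polynomially bounded --- it can be of order $n^{n/2}$ --- only its bit-length is polynomial; the search still runs in polynomially many calls.) The cost of your route is the appeal to NP-hardness of the fractional chromatic number, which is a citable classical result but whose standard proof itself runs through the same separation/optimization equivalence the paper invokes directly; so your argument is more explicit and self-contained at the level of the reduction, not more elementary at its foundation.

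Two caveats on your secondary, ``first-moment'' route: the gap you flagged is real. Tightness of the bound $f(p\bold{1}_n)\le n(1-p)/(n-\alpha(G))$ requires $(\alpha(G)/n)\bold{1}_n$ to lie in the convex hull of maximum-independent-set indicators, which vertex-transitivity guarantees, but a hardness-preserving ``symmetrization'' of an arbitrary graph is not something that can be waved into existence. If you want that route, the clean fix is to run the reduction on circulant graphs, which are vertex-transitive and for which computing the independence number is known to be NP-hard; as written, however, this alternative has a hole, and since your primary route stands on its own, it should simply be dropped.
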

\begin{proof}
The dual linear program for computing $L(r)$ is given by:
\begin{align*}
L(r) = \max & \; \; \displaystyle\lambda + \sum_{i=1}^n \alpha_{i} p_{i} \nonumber\\
\mbox{s.t.}&\; \;  \displaystyle\lambda + \sum_{i=1}^n \alpha_{i} c_i \leq 1, \text{ for } \bold{c} \in [0,1]^n, \\
& \; \; \displaystyle \lambda + \sum_{i=1}^n \alpha_{i} c_i  \leq 0, \text{ for } Z(\bold{c}) \leq r-1, \bold{c} \in  [0,1]^n,
\end{align*}
where the decision variables are $\lambda$ and $\alpha_{i}$ for $i \in [n]$.
The relevant separation problem to be solved to compute $L(r)$ boils down to:
\begin{align}
\label{constr:dual_separation_Z(c)_less_than_r}
\max \left\lbrace \sum_{i=1}^n \alpha_i c_i: \bold{c}'\bold{x}^j \leq r-1, \text{ for } j \in [P], c_i \in \{0,1\}, \text{ for } i \in [n]  \right \rbrace,
\end{align}
where $\boldsymbol{\alpha} \in \mathbb{R}^n$  is given. This is NP-hard to solve.
To see this, consider a graph $G= (V,E)$ on $n$ nodes. Given an undirected graph $G = (V,E)$, let $n = |V|$ and $P = |E|$. Define the set ${\cal X}$ as the set of incidence vectors of the graph:
$$\displaystyle {\cal X} = \{\bold{x}^{e}; e \in E\} \subseteq \{0,1\}^n,$$ where for any $e = (i,j) \in E$, we let $x^{e}_{i} = 1$, $x^e_{j} = 1$ and $x^{e}_k = 0$ for all $k \neq i, j$. Setting $\alpha_i = 1$ for all $i$ and  $r=2$ in \eqref{constr:dual_separation_Z(c)_less_than_r} solves the maximum independent set problem. Since the separation problem is NP-hard to solve, the optimization problem is NP-hard to solve and computing $L(r)$ is NP-hard.
\end{proof}

We next discuss hardness results for computing the probabilities with independent random variables. The next theorem is taken from \cite{kleinberg} who showed that computing the probability of the sum of independent discrete random variables is \#P-hard.
\begin{theorem} \label{thm:knap} \cite{kleinberg}
Let $\tilde{c}_i$ be a two point random variable with $\mathbb{P}(\tilde{c}_{i} =a_i) = 1-\mathbb{P}(\tilde{c}_{i} = 0)=p_{i}$ for $a_i \in \mathbb{Z}_{+}$. Computing the probability $I(r) = \mathbb{P}_{\theta_{ind}}(\sum_{i=1}^{n}\tilde{c}_i \geq r)$ is \#P-hard.
\end{theorem}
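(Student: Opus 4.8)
The plan is to establish \#P-hardness by exhibiting a polynomial-time reduction from the problem of counting the feasible solutions of a 0-1 knapsack instance, which is a classical \#P-complete counting problem. Concretely, I would begin from an instance consisting of positive integers $a_1,\ldots,a_n$ and a capacity $b$, for which counting the subsets $S \subseteq [n]$ with $\sum_{i \in S} a_i \leq b$ is known to be \#P-hard.

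Given such an instance, I would construct the independent two-point random variables prescribed in the statement by setting $\mathbb{P}(\tilde{c}_i = a_i) = \mathbb{P}(\tilde{c}_i = 0) = 1/2$ for every $i \in [n]$, reusing verbatim the integers $a_i$ of the knapsack instance. Under $\theta_{ind}$ with all $p_i = 1/2$, each of the $2^n$ realizations $\bold{c} \in \prod_{i=1}^n \{0,a_i\}$ is equally likely with probability $2^{-n}$, and realizations are in bijection with subsets through $S = \{i : \tilde{c}_i = a_i\}$, so that $\sum_{i=1}^n \tilde{c}_i = \sum_{i \in S} a_i$. The probability therefore collapses into an unweighted count:
\begin{align*}
I(r) = \mathbb{P}_{\theta_{ind}}\left(\sum_{i=1}^n \tilde{c}_i \geq r\right) = \frac{1}{2^n}\left|\left\{S \subseteq [n] : \sum_{i \in S} a_i \geq r\right\}\right|.
\end{align*}

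Next I would take $r = b+1$, so that $2^n I(b+1)$ equals the number of subsets whose sum strictly exceeds $b$; by complementation within all $2^n$ subsets, the number of feasible knapsack solutions is recovered exactly as $2^n\bigl(1 - I(b+1)\bigr)$. Because the integers $a_i$ are carried over unchanged, the probabilities $1/2$ require only constant bit-length, and $r = b+1$ is of size polynomial in the input, the reduction runs in polynomial time and uses a single call to an oracle for $I(\cdot)$. Hence a polynomial-time algorithm for $I(r)$ would count knapsack solutions in polynomial time, so computing $I(r)$ is \#P-hard.

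The step requiring the most care is verifying that one evaluation of $I(r)$ recovers the exact count, so that the argument is a clean polynomial-time Turing reduction rather than merely an approximation, together with the observation that the symmetric choice $p_i = 1/2$ is what turns a probability into a pure counting quantity. One should also confirm that the reduction respects the input-size parametrization introduced earlier, in which the magnitudes $a_i$ enter only logarithmically (through $\log_2 U_2$), so that no spurious blow-up in instance size is hidden inside the large integer coefficients.
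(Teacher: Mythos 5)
Your proposal is correct and follows essentially the same route the paper takes: the paper states this result as a citation to Kleinberg et al.\ and notes it is proved ``using a reduction from the problem of counting the number of feasible solutions to a 0-1 knapsack problem,'' which is exactly the reduction you spell out (with $p_i = 1/2$, $r = b+1$, and the count recovered as $2^n\bigl(1 - I(b+1)\bigr)$). Your closing observation that the hardness is driven by the binary-encoded magnitudes $a_i$ rather than the probabilities also matches the paper's remark that for restricted (e.g.\ Bernoulli) support the sum becomes a Poisson Binomial whose tail is computable in polynomial time.
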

The hardness in Theorem \ref{thm:knap} was shown using a reduction from the counting version of the knapsack problem. The hardness result in their construction arises from the support of the random variables. Specifically when the random variables have restricted support such as Bernoulli, the sum is a Poisson Binomial random variable for which the probability is computable in polynomial time through recursion \cite{chen1997statistical}. We next show however that for $Z(\tilde{\bold{c}})$ given as the optimal value of a maximization problem over a compact 0/1 V-polytope, computing the probability under the assumption of independence is hard even when the random variables are Bernoulli.

\begin{theorem}
\label{thm:hardness_independence}
Let ${\cal X} = \{\bold{x}^1, \ldots, \bold{x}^P \} \subseteq \{0,1\}^n$. Given the marginal distributions of the Bernoulli random vector $\tilde{\bold{c}}$ as $\mathbb{P}(\tilde{c}_i = 1) = 1-\mathbb{P}(\tilde{c}_i = 0) = p_{i}$ for $i \in [n]$, computation of the probability $\mathbb{P}_{\theta_{ind}} (Z(\tilde{\bold{c}}) \geq r)$ is \#P-hard and cannot be computed in time polynomial in the input size unless P = NP.
\end{theorem}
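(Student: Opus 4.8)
The plan is to reduce from the \#P-hard problem of counting independent sets in an undirected graph, reusing the V-polytope construction already employed in the proof of \Cref{thm:hardness_extremal_v}. Given a graph $G = (V,E)$ with $n = |V|$, I would associate a Bernoulli random variable $\tilde{c}_i$ with each vertex $i \in V$, set $p_i = 1/2$ for every $i$, take $\mathcal{X} = \{\bold{x}^e : e \in E\}$ to be the set of edge-incidence vectors (so that $P = |E|$ and the instance size remains polynomial in the size of $G$), and fix the threshold $r = 2$.

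The key observation is that with this choice $Z(\bold{c}) = \max_{(i,j) \in E}(c_i + c_j)$, so for $\bold{c} \in \{0,1\}^n$ the event $Z(\bold{c}) \geq 2$ occurs exactly when some edge $(i,j)$ satisfies $c_i = c_j = 1$; equivalently, letting $S = \{i : c_i = 1\}$, the event $Z(\bold{c}) \leq 1$ holds precisely when $S$ is an independent set of $G$. Under the independent distribution $\theta_{ind}$ with all $p_i = 1/2$, each of the $2^n$ subsets $S \subseteq V$ is realized with probability $2^{-n}$, so the number $N$ of independent sets of $G$ satisfies
\[
\mathbb{P}_{\theta_{ind}}\bigl(Z(\tilde{\bold{c}}) \geq 2\bigr) = 1 - \frac{N}{2^n}.
\]

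Consequently, any algorithm evaluating $I(r) = \mathbb{P}_{\theta_{ind}}(Z(\tilde{\bold{c}}) \geq r)$ in time polynomial in the input size would recover $N = 2^n\bigl(1 - I(2)\bigr)$ in polynomial time, contradicting the \#P-hardness of counting independent sets unless P $=$ NP. I expect no genuine obstacle in the combinatorial core of the argument, since the count is read off from a single probability evaluation via an exact affine identity; the only points requiring care are verifying that the reduction is polynomial (the rationals $p_i = 1/2$ have constant-size ratio representations and $P = |E|$), and handling the degenerate case $E = \emptyset$, for which $N = 2^n$ can be reported directly.
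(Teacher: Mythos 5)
Your proposal is correct and follows essentially the same route as the paper's proof: the same reduction from counting independent sets, using edge-incidence vectors as $\mathcal{X}$, marginals $p_i = 1/2$, threshold $r = 2$, and the identity $\mathbb{P}_{\theta_{ind}}(Z(\tilde{\bold{c}}) \geq 2) = 1 - N/2^n$ where $N$ is the number of independent sets. Your additional remarks on the polynomial size of the reduction and the degenerate case $E = \emptyset$ are fine but not needed beyond what the paper states.
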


\begin{proof}
We will do a reduction from counting the number of independent sets in a graph.
Given an undirected graph $G = (V,E)$, let $n = |V|$ and $P = |E|$. Define the set ${\cal X}$ as the set of incidence vectors of the graph:
$$\displaystyle {\cal X} = \{\bold{x}^{e}; e \in E\} \subseteq \{0,1\}^n,$$ where for any $e = (i,j) \in E$, we let $x^{e}_{i} = 1$, $x^e_{j} = 1$ and $x^{e}_k = 0$ for all $k \neq i, j$. Let $\mathbb{P}(\tilde{c}_i = 1) = 1-\mathbb{P}(\tilde{c}_i = 0) = 1/2$ and $r = 2$. Then:
\begin{align*}
\mathbb{P}_{\theta_{ind}} \left(\max_{e \in E} \tilde{\bold{c}}'\bold{x}^{e} \geq 2 \right) &= 1-\mathbb{P}_{\theta_{ind}} \left(\max_{e \in E} \tilde{\bold{c}}'\bold{x}^{e} \leq 1 \right) \\
 &= 1-\mathbb{P}_{\theta_{ind}} (\tilde{c}_i + \tilde{c}_j \leq 1 \text{ for } (i,j) \in E)  \\
&= 1-\mathbb{P}_{\theta_{ind}} (\tilde{\bold{c}} \text{ induces an independent set on G}) \\
& = 1-\frac{\text{No. of independent sets in G}}{2^n}.
\end{align*}
Since computing the number of independent sets is \#P-hard, so is computing $I(r)$.
\end{proof}

\section{Bounds for the H-Polytope: PERT Networks}
\label{sec:h_polytope}
In this section, we consider combinatorial optimization problems with a known compact H-polytope representation. While the formulations in the previous section can be used for V-polytope representations, the complexity of the formulations depend on $P$ and can be cumbersome in applications where $P$ is large.  It is therefore desirable to have compact formulations under known H-polytope representations.  We will now show that for PERT networks represented with a H-polytope, the upper bound $U(r)$ is efficiently computable in polynomial time in $n$ and $K$.
PERT networks are widely used  in project planning and management across various settings such as construction projects, software planning projects and facility maintenance projects.  A PERT network is denoted by a directed acyclic graph (DAG) $G = (V, E)$ where $V$ is the set of vertices and $E$ is the set of edges. The start node is denoted by $s \in V$ and the terminal node is denoted by $t \in V$.  The arcs represent activities in the project and nodes represent events in an activity on arc framework \cite{elmaghraby1977activity}. The network structure captures precedence relationships among the activities. Each activity is associated with a random time duration to complete that activity. For fixed activity durations denoted by $c_{ij}$ for $(i,j) \in E$, the completion time of the project is computed as the longest path from node $s$ to $t$. This is formulated as the 0-1 integer program:
\begin{align*}
\begin{array}{rllllll}
  Z^{\text{pert}}(\bold{c}) =  \max  & \displaystyle \sum_{(i,j) \in E} c_{ij} x_{ij}  \\
   \mbox{s.t} & \displaystyle \sum_{j: (i,j) \in E} x_{ij} - \sum_{j:
    (j,i) \in E} x_{ji} =
\begin{cases}
1, \quad &\mbox{if } i=s, \\
 -1,  &\mbox{if } i=t, \\
0, & \mbox{otherwise, } \\
\end{cases} \\
&  x_{ij} \in \{0,1\},  \quad\text{ for }  (i,j) \in E.
\end{array}
\end{align*}
The total unimodularity of the constraint matrix ensures that the  LP relaxation exactly solves the integer program and $Z^{pert}(\bold{c})$ is polynomial time computable.

There is a large stream of literature on uncertain PERT networks \cite{wiesemann2012optimization,ROOS2021918} and computing the distribution and the expected value of $Z^{\text{pert}}(\tilde{\bold{c}})$ with independent activity durations. Evaluating both the distribution and the expected value are known to be \#P-hard \cite{Hagstrom1988} and not polynomial time computable even in the number of values that the project duration takes. Several approximations and bounds have been proposed (see  \cite{Fulkerson1962,Dodin1985,Kleindorfer1971}). In special cases, the computation of the distribution and the expected value are known to be possible in polynomial time with independent distributions.  Specifically, for the class of series parallel graphs with activity durations supported in $[0,K]$, the worst case probability and expectation bounds can be computed in polynomial time. For more general graphs, prior works of \cite{Dodin1985,Kleindorfer1971} have also constructed approximations by using  transformations to series parallel graphs.

Applying the formulation in \Cref{thm:probability_bound_general_discrete} requires enumeration of the $P$ extreme points which in the setting of PERT networks, corresponds to the $s$-$t$ paths in the network. The previous formulation is hence useful only when the number of $s$-$t$ paths does not grow rapidly.
We next propose a tight formulation that does not require the enumeration of the $s$-$t$ paths. Specifically the result implies that for extremal dependence, the worst-case probability is polynomial time computable for general DAG under the assumption of restricted support in $[0,K]$ while for independent distributions, such a result is possible only for restricted graphs like series parallel graphs.

\begin{theorem}
\label{thm:pert_primal_lp}
Consider a PERT network $G = (V,E)$ with $|E| =n$ and $s$ and $t$ denoting the source and terminal nodes respectively. Given the marginal distributions of the activity duration vector $\tilde{\bold{c}}$ as $\mathbb{P}(\tilde{c}_{ij} = k) = p_{ijk}$ for $(i,j) \in E$, $k \in  [0,K]$ and $r \in [0, nK]$, the tightest upper bound on the probability of the project completion time taking a value greater than or equal to $r$ is the optimal value of the linear program:
\begin{align}
U^{pert}(r) = \max &\;\; a \nonumber\\
\mbox{s.t.}&\;\; a + b = 1, \label{eq:pert_a+b}\\
& \sum_{k =0}^K h_{ijk} = b,  \text{ for } (i,j) \in E,   \label{eq:pert_h}\\
&  \sum_{k =0}^K g_{ijk} + \sum_{k =0}^K \sum_{l=k}^{nK} \delta_{ij,k,l} = a, \text{ for } (i,j) \in E,  \label{eq:g+delta=a} \\
& h_{ijk} + g_{ijk} + \sum_{l=k}^{nK} \delta_{ij,k,l} = p_{ijk}, \text{ for } (i,j) \in E, k \in  [0,K],\label{eq:consistency_with_marginals} \\
& a = \sum_{l=r}^{nK} \tau_l, \label{eq:a_tau} \\
& \tau_l = \sum_{i: (i,t) \in E} \sum_{k =0}^{\min(l,K)} \delta_{it, k, m}, \text{ for } l \in [r, nK] ,\label{eq:tau_m} \\
&\sum_{j:(i,j) \in E} \sum_{k = 0}^{\min(K, nK-l)} \delta_{ij, k, l+k} = \sum_{j:(j,i) \in E}\sum_{k = 0}^{\min(K,l)} \delta_{ji, k, l},  \label{eq:delta_any_node}\\
& \;\;\;\;\;\;\;\;\;\text{ for } i \in V \setminus \{s, t \}, l \in [0, nK], \nonumber \\
& \sum_{i: (i,t) \in E}  \sum_{k =0}^{\min(l,K)} \delta_{it, k, l}  = 0, \text{ for } l \in [0, r-1] ,\label{eq:delta_dest} \\
& \sum_{i:(s,i) \in E} \sum_{k = 0}^{\min(K, nK-l)} \delta_{si, k, l+k}  = 0,  \text{ for } l \in [1,nK] ,\label{eq:delta_source} \\
& a, b \geq 0, \tau_l \geq 0, \text{ for } l \in [r, nK] ,\nonumber \\
&h_{ijk}, g_{ijk} \geq 0, \text{ for } (i,j) \in E, k \in [0,K] ,\nonumber \\
& \delta_{ij,k,l} \geq 0, \text{ for } (i,j) \in E, k \in  [0,K], l \in [k, nK] .\nonumber
\end{align}
\end{theorem}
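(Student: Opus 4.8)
The plan is to follow the same primal--dual template used in the proof of \Cref{thm:probability_bound_general_discrete}, but to replace the multiple-choice knapsack dynamic program (which ran over the linearly ordered indices $i=1,\ldots,n$) by a constrained longest-path dynamic program that runs over the DAG structure of $G$, so that the $s$--$t$ paths (the extreme points of the H-polytope) never have to be enumerated. First I would write the exponential-sized primal LP for $U^{pert}(r)$ exactly as in the V-polytope case, namely maximize $\sum_{\bold{c}} \theta(\bold{c}) \mathbbm{1}_{\{Z^{pert}(\bold{c}) \geq r\}}$ over joint distributions $\theta$ consistent with the marginals $p_{ijk}$, and pass to its dual. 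The dual has the polynomially many variables $\lambda$ and $\alpha_{ijk}$ together with two families of constraints indexed by realizations $\bold{c} \in [0,K]^n$: one family with right-hand side $1$ for every $\bold{c}$ with $Z^{pert}(\bold{c}) \geq r$ (the analog of \eqref{constr:1_general}) and one family with right-hand side $0$ for every $\bold{c}$ (the analog of \eqref{constr:0_general}). As in Step (2) of \Cref{thm:probability_bound_general_discrete}, the ``$\geq 0$'' family is immediate: its separation problem decouples across arcs, so it is equivalent to introducing per-arc variables $v_{ij}$ with $\alpha_{ijk} \geq v_{ij}$ for all $k$ together with $\lambda + \sum_{(i,j)} v_{ij} \geq 0$.

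The crux is the ``$\geq 1$'' family, whose separation problem is
$$\lambda + \min\left\{\sum_{(i,j) \in E}\sum_{k=0}^{K} \alpha_{ijk} y_{ijk} : Z^{pert}(\bold{c}) \geq r,\ \sum_{k=0}^K y_{ijk} = 1,\ y_{ijk} \in \{0,1\}\right\} \geq 1,$$
where $c_{ij} = \sum_k k\,y_{ijk}$. The key structural observation is that $Z^{pert}(\bold{c}) \geq r$ holds if and only if there exists a single $s$--$t$ path whose realized durations sum to at least $r$, and every arc off this path may be set to its cheapest value $\min_k \alpha_{ijk}$ without affecting feasibility. Hence the minimization collapses into a fixed baseline (all arcs cheapest) plus the minimum extra cost of boosting one $s$--$t$ path's total duration to $\geq r$, which is a resource-constrained shortest-path problem on the DAG. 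I would encode this by a dynamic program whose state is a pair (node, accumulated duration $l \in [0,nK]$); the DP value is carried by the flow-type variables $\delta_{ij,k,l}$, with flow conservation at every intermediate node giving \eqref{eq:delta_any_node}, the boundary conditions at $s$ and $t$ giving \eqref{eq:delta_source} and \eqref{eq:delta_dest}, and the threshold $l \geq r$ at the terminal node recording that the witnessing path has duration at least $r$. Because the state graph on (node, accumulated duration) is itself acyclic, its flow polytope is integral, so the LP relaxation of this DP is exact; dualizing it and substituting back reformulates the entire ``$\geq 1$'' family as a polynomial set of linear constraints.

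Substituting both reformulations into the dual yields a compact minimization LP in $\lambda$, the $\alpha$'s, the $v$'s, and the DP/flow multipliers, with $O(nK)$-many variables and constraints. Taking the dual of this compact minimization (exactly as in the last line of the proof of \Cref{thm:probability_bound_general_discrete}) produces the stated program, and the variables acquire their probabilistic meaning: $a = \mathbb{P}_{\theta^*}(Z^{pert} \geq r)$ and $b = \mathbb{P}_{\theta^*}(Z^{pert} < r)$ with $a+b=1$ from \eqref{eq:pert_a+b}; $h_{ijk}$ carries the mass of arc value $k$ in the event $\{Z^{pert} < r\}$ via \eqref{eq:pert_h} and \eqref{eq:consistency_with_marginals}; $g_{ijk}$ carries the mass of off-witness-path arcs in the event $\{Z^{pert} \geq r\}$; $\delta_{ij,k,l}$ tracks value $k$ on the witnessing path at accumulated duration $l$; and $\tau_l$ records $\mathbb{P}_{\theta^*}(Z^{pert} = l)$ for $l \geq r$ through \eqref{eq:a_tau}--\eqref{eq:tau_m}. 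Consistency with the marginals is enforced by \eqref{eq:consistency_with_marginals}, while \eqref{eq:g+delta=a} splits the $\{Z^{pert} \geq r\}$ mass across each arc between its off-path ($g$) and on-path ($\delta$) roles.

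I expect the main obstacle to be the correct and rigorous reduction of the ``longest path $\geq r$'' separation problem to an exact compact flow LP. Two points need care: first, arguing that it suffices to track one witnessing path together with cheapest off-path assignments, so that the max-over-paths existence condition defining $\{Z^{pert} \geq r\}$ collapses to a single-path reachability-with-resource condition that a flow formulation can capture; and second, verifying the integrality of the layered flow polytope so that the LP relaxation used for separation is tight, which should follow from the acyclicity of $G$ and of the (node, accumulated-duration) state graph. Once these are established, the remaining primal--dual bookkeeping is routine and parallels \Cref{thm:probability_bound_general_discrete}, and the polynomiality in $n$ and $K$ is immediate from the $O(nK)$ size of the state space.
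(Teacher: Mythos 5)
Your proposal is correct and follows essentially the same route as the paper's proof: dualize the exponential-sized primal, reformulate the ``$\geq 0$'' constraints by per-arc minimization, decompose the ``$\geq 1$'' separation problem into a cheapest-assignment baseline $\sum_{(i,j)} \min_k \alpha_{ijk}$ plus a resource-constrained path problem over states (node, accumulated duration $l \in [0,nK]$), and then dualize the resulting compact minimization LP to obtain the stated program. The only cosmetic difference is that you phrase the separation subproblem as an integral flow on the layered state graph, whereas the paper writes the dynamic programming recursions directly as the equivalent potential-based LP in variables $f_{l,i}$; these two formulations are linear-programming duals of each other, so the content is the same.
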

Namely $\max_{\theta \in \Theta} \mathbb{P}_{\theta}(Z^{pert}(\tilde{\bold{c}}) \geq r) = U^{pert}(r)$.
\begin{proof}
The approach will, as before, involve developing a compact formulation for the separation problem in (\ref{constr:1_general}). We will make use of the structure of the $s$-$t$ flow polytope in order to derive the reduced formulation. Given $\lambda$ and $\boldsymbol{\alpha}$, the constraint (\ref{constr:1_general}) is equivalent to:
\begin{align}
\label{sep:min_cost_assignment}
  \lambda + \underbrace{\min \left\lbrace \sum_{(i,j) \in E} \sum_{k =0}^K \alpha_{ijk} \mathbbm{1}_{\{c_{ij} = k\}}:  Z^{pert}(\bold{c}) \geq r, c_{ij} \in  [0,K], \text{ for } (i,j) \in E  \right\rbrace}_{\mbox{Sep}(\boldsymbol{\alpha})} \geq 1.
\end{align}
This problem looks at assigning a length from the set $ [0,K]$ to each edge $c_{ij}$ where  the cost of assigning  length $k$ to $c_{ij}$ is $\alpha_{ijk}$. In particular, we want to compute a minimum cost assignment of the lengths to $c_{ij}$ in such a way that the longest path from node $s$ to $t$ has a length at least $r$. This is equivalent to ensuring the existence of a $s$-$t$ path with length at least $r$. The costs $\boldsymbol{\alpha}_{ij} = (\alpha_{ijk}; k \in [0,K])$ can be viewed as a mapping from $ [0,K]$ to $\mathbb{R}$, albeit without any  structural assumptions such as monotonicity, non-negativity etc.
 Observe that for each edge $(i,j) \in E$, we will always incur a cost of at least $q_{ij} = \min_{k \in  [0,K]} \alpha_{ijk}$.   We focus on minimizing the updated costs $v_{ijk} = \alpha_{ijk} - q_{ij} \geq 0$. In particular for $k^* \in \argmin_{k \in  [0,K]} \alpha_{ijk}$, we have $v_{ijk^*}=0$. The optimization problem $\mbox{Sep}(\boldsymbol{\alpha})$ in (\ref{sep:min_cost_assignment}) can therefore be split up as follows:
 \begin{align}
 \label{sep:pert_opt_problem}
\displaystyle \mbox{Sep}(\boldsymbol{\alpha}) = \sum_{(i,j) \in E} q_{ij} +  \left\{ \begin{array}{ll}
& \displaystyle \min \;\; \sum_{(i,j) \in E} \sum_{k =0}^K \overbrace{(\alpha_{ijk} - q_{ij})}^{v_{ijk}} \mathbbm{1}_{\{c_{ij} = k\}} \\
 &\displaystyle \,\mbox{s.t} \;\; Z^{pert}(\bold{c}) \geq r, c_{ij} \in  [0,K] \text{ for } (i,j) \in E
 \end{array} \right\}.
  \end{align}
 We will now focus on finding an assignment to $\bold{c}$ so as to solve the optimization problem in the second term in \Cref{sep:pert_opt_problem}. Observe that we want to minimize the updated costs $\bold{v}$ subject to the constraint  $Z(\bold{c}) \geq r$. For this, we propose a set of dynamic programming recursions as follows.

 Let $f_{l,i}$ denote the best value of the objective in the optimization problem \eqref{sep:pert_opt_problem} when there exists a path from $s$ to $i$ with of length exactly $l$.  The computation of $f_{l,i}$ gives a minimum cost assignment such that some path from $s$ to $i$ has a length of exactly $l$. Since a PERT network is described by a DAG, there exists an ordering of the vertices by means of a topological sort. Denote such an ordering by $O_{top}$. The base case of the dynamic program is given by the  computation of $f_{0,s}$ for the source node $s$. Clearly $f_{0,s} = 0$ as the assignment $c_{ij} = \argmin_{k \in S} v_{ijk}$ incurs a total cost of $0$ and any path from $s$ to itself has a length of $0$ trivially.
Next we describe the induction step.  For any node $j$, let the value of $f_{l,i}$ be known for all nodes $i$ such that $(i,j) \in E$, $l \in [0,  nK]$.  This is possible when we fill the columns of the matrix $f$  in the order given by $O_{top}$. The following relations hold,
\begin{align*}
\displaystyle f_{l,j} = \min_{i: (i,j) \in E} \min_{k \in  [0,K]} \left(f_{l-k, i} + v_{ijk} \right), \text{ for } l \in  [k,  nK]
\end{align*}
This hold since if a path of length $l$  exists from $s$ to $j$ and an edge $(i,j)$ on this path  is assigned a value of $k$, then the path from $s$ to $i$ must have a length of $l-k$. The optimal value of the objective must therefore choose the minimum value generated out of all possible assignments for all incoming arcs $(i,j)$ to node $j$. Finally the objective function in \eqref{sep:pert_opt_problem} requires that the assignment produces a path of length of at least $r$ from $s$ to $t$. Let $z$ denote the objective value of the optimization problem in \eqref{sep:pert_opt_problem}.  Then,
$z = \min_{l \in [r, nK]} f_{l,t}.$ Putting all the dynamic programming recursions together gives us the following compact linear program for $\mbox{Sep}(\boldsymbol{\alpha})$:
\begin{align*}
\begin{array}{rrlll}
\displaystyle \mbox{Sep}(\boldsymbol{\alpha})= & \max\limits_{\bold{q}, \bold{f}, z}&\displaystyle \sum_{(i,j) \in E} q_{ij} + z \\
&\mbox{s.t.}&\displaystyle q_{ij} \leq \alpha_{ijk}, \text{ for } (i,j) \in E, k \in  [0,K], \\
&&\displaystyle f_{0,s}  = 0,\\
&&\displaystyle f_{l, j} \leq f_{l-k, i} + \alpha_{ijk} - q_{ij}, \text{ for } (i,j) \in E, k \in [0,K], l \in [k, nK] , \\
&&\displaystyle z \leq f_{l, t}, \text{ for } l \in [r, nK].
\end{array}
\end{align*}
Now, forcing the above linear program to take a value greater than 1 gives the following reformulation for \eqref{constr:1_general} in the exponential sized dual formulation,
 \begin{align*}
\begin{array}{rlll}
& \lambda + \sum_{(i,j) \in E} q_{ij} + z \geq 1, \\
& q_{ij} \leq \alpha_{ijk}, \text{ for } (i,j) \in E, k \in  [0,K], \\
& f_{0,s}  = 0,\\
& f_{l, j} \leq f_{l-k, i} + \alpha_{ijk} - q_{ij}, \text{ for } (i,j) \in E, k \in [0,K], l \in [k, nK] , \\
& z \leq f_{l, t}, \text{ for } l \in [r, nK].
\end{array}
\end{align*}
 Constraint \eqref{constr:0_general} can be reformulated in the same manner as described in proof of \Cref{thm:probability_bound_general_discrete}. Combining the  reformulations for  \eqref{constr:1_general} and \eqref{constr:0_general} gives us,
\begin{align*}
\begin{array}{rlll}
\min & \displaystyle \lambda + \sum_{(i,j) \in E} \sum_{k \in [0,K]} \alpha_{ijk} p_{ijk} \\
\mbox{s.t.}& \displaystyle \lambda + \sum_{(i,j) \in E} d_{ij} \geq 0,\\
 &\displaystyle  \alpha_{ijk} - d_{ij} \geq 0, \text{ for } (i,j) \in E, \text{ for } k \in [0,K], \\
&\displaystyle  \lambda + z+ \sum_{(i,j) \in E} q_{ij}  \geq 1, \\
&\displaystyle  \alpha_{ijk}- q_{ij} \geq 0, \text{ for } (i,j) \in E, k \in  [0,K], \\
&\displaystyle  f_{0,s}  = 0,\\
&\displaystyle  f_{l-k, i} + \alpha_{ijk} - q_{ij} - f_{l,j} \geq 0, \text{ for } (i,j) \in E, k \in [0,K], l \in [k, nK] , \\
&\displaystyle  f_{l, t}- z \geq 0, \text{ for } l \in [r, nK].
\end{array}
\end{align*}
 Further taking the dual of this linear program gives us the formulation in the theorem.
\end{proof}
The techniques used in deriving \Cref{thm:probability_bound_general_discrete} and \Cref{thm:pert_primal_lp} rely on dynamic programming. However by making further use of the problem structure, we are able to obtain a further reduced formulation in \Cref{thm:pert_primal_lp} for PERT networks.

\section{Numerical Results}
\label{sec:numericals}
In this section, we provide numerical results from different formulations. All computations were carried out using Gurobi \cite{gurobi} on a Python interface. The following probabilities are computed in different examples.
\texitem{(a)} Upper bound $U(r)$: The tightest upper bounds are computed using the linear programs in Theorem \ref{thm:probability_bound_general_discrete} and \ref{thm:pert_primal_lp}.
\texitem{(b)} Markov bound: Using Markov's inequality gives us a valid upper bound for any distribution $\theta \in \Theta$ and positive value of $r$:
\begin{align*}
\displaystyle \mathbb{P}_{\theta}(Z(\tilde{\bold{c}}) \geq r) \leq \min\left(\max_{\theta \in \Theta} \mathbb{E}_{\theta}[Z(\tilde{\bold{c}})]  / r,1\right).
\end{align*}
To compute the maximum expected value when $\mathcal{X} $ is represented with a V-polytope, we can use existing results in the literature. Specifically using the formulation proposed in \cite{meilijson1979convex}, we get:
\begin{align*}
\begin{array}{rllll}
\displaystyle \max_{\theta \in \Theta} \mathbb{E}_{\theta}[Z(\tilde{\bold{c}})] = \max_{\boldsymbol{\gamma}, \boldsymbol{\lambda}} &\displaystyle\sum_{i=1}^n \sum_{k=0}^K k \gamma_{ik} p_{ik} \\
\mbox{s.t.}&\displaystyle \sum_{\bold{x} \in \mathcal{X}} \lambda_{\bold{x}} = 1,\\
&\displaystyle \sum_{k =0}^{K}p_{ik} \gamma_{ik}  = \sum_{\bold{x} \in \mathcal{X}: x_i = 1} \lambda_{\bold{x}},
 \\
&\displaystyle 0 \leq \gamma_{ik} \leq 1 \; \text{ for } i \in [n], \\
&\displaystyle \lambda_{\bold{x}} \geq 0 \, \text{ for } \bold{x} \in \mathcal{X},
\end{array}
\end{align*}
where the random variables $\tilde{c}_i$ take support in $[0,K]$ with $p_{ik} = \mathbb{P}(\tilde{c}_i = k)$ for $k \in [0,K]$ and $i \in [n]$. 
\texitem{(c)} Independence: To compute $I(r) = \mathbb{P}_{\theta_{ind}}(Z(\tilde{\bold{c}} \geq r)$, we approximate the probability using a simulation of 10000 runs.
\texitem{(d)} Distribution maximizing $\mathbb{E}[Z(\tilde{\bold{c}} - r) ^+]$: A formulation that computes this maximum expectation can be derived using the techniques in \cite{meilijson1979convex,Doan2012}. We provide the formulation below.
\begin{align}
\label{opt:max_exp_z_c_minus_r}
\begin{array}{rllll}
 \displaystyle \max \mathbb{E}[Z(\tilde{\bold{c}})-r]^+  =  \max & \displaystyle \sum_{(i,j) \in E} \sum_{k=0}^K k g_{ijk}p_{ijk} - r \sum_{\bold{x} \in \mathcal{X}} \lambda_{\bold{x}} \\
 \mbox{s.t.} &\displaystyle  \sum_{\bold{x} \in \mathcal{X}} \lambda_{\bold{x}} \leq 1, \\
 &\displaystyle  g_{ijk} \leq 1, \text{ for } (i,j) \in E, k \in [0, K], \\
 &\displaystyle  \sum_{\bold{x} \in \mathcal{X}: x_{ij} = 1} \lambda_{\bold{x}} = \sum_{k =0}^K g_{ijk} p_{ijk}, \text{ for } (i,j) \in E ,\\
 &\displaystyle  g_{ijk} \geq 0, \text{ for } (i,j) \in E, k \in [0, K],\\
 &\displaystyle  \lambda_{\bold{x}} \geq 0, \text{ for } \bold{x} \in \mathcal{X}.
 \end{array}
\end{align}
Extending the results in \cite{Weiss1986} to other applications, the term $\sum_{\bold{x} \in \mathcal{X}} \lambda_{\bold{x}}$  gives us  $\mathbb{P}(Z(\tilde{\bold{c}}) \geq r) $ for the extremal distribution which maximizes $\mathbb{E}[Z(\tilde{\bold{c}})-r]^+$. We refer to this probability bound as `Worst Exp' in all the plots.

\subsection{Sums of Random Variables with Limited Dependence}\label{subsec:limiteddependence}
We first provide a numerical application of the weighted probability bounds to the sums of random variables by allowing for a limited degree of dependence. This is achieved by considering a split of the set of random variables into two sets - one set which allows for extremal dependence among the variables while the other set which contains mutually independent variables. The random variables across the two sets are assumed independent of each other. Specifically let $P(\tilde \alpha_i =1)=1-P(\tilde \alpha_i =0)=p_{i}$ for $i \in [n_1]$ and $P(\tilde \beta_j =1)=1-P(\tilde \beta_j =0)=q_{j}$ for $j \in [n_2]$. The dependence among random variables in $\tilde{\mb{\alpha}}$ are not specified while the random variables in $\tilde{\mb{\beta}}$ are mutually independent. The two sets of random variables are also independent of each other.  Under this model, we will see that the bound on the tail probability of the sum of random variables can be reformulated using the weighted probability bound in Theorem \ref{thm:weightedLPBernoulli} where the weights are appropriately computed.

 Given $ r\in [0,n_1+n_2]$, let the tightest upper bound on the tail probability be given as:
\begin{align}\label{eq:limitedtailtight}
\overline{S}(r,1)  &=\max_{\theta \in \Theta_{\ell}} \mathbb{P}_{\theta}\bigg(\displaystyle \sum_{i=1}^{n_{1}} \tilde \alpha_{i}+\sum_{j=1}^{n_{2}} \tilde\beta_{j}\geq r\bigg),
\end{align}
where $\Theta_{\ell}$ is the set of distributions consistent with the given assumptions:
\begin{align*}
\Theta_{\ell}= \big\{
 \theta \in \mathbb{P}(\{0,1\}^{n_1+n_2}) :\; & \mathbb{P}_\theta\left(\bm{\alpha},\bm{\beta}\right) =\mathbb{P}_\theta\left(\bm{\alpha}\right)\mathbb{P}_{\theta_{ind}}\left(\bm{\beta}\right),
   & \;\mbox{for}\; \left(\bm{\alpha},\bm{\beta}\right)  \in \{0,1\}^{n_1+n_2},\\
    &\mathbb{P}_\theta\left(\tilde{\alpha}_i = 1
    \right) = p_{i} ,
   & \hspace{-1.5cm}\;\mbox{for}\; i \in [n_{1}] \big\},
\end{align*}
where $\theta_{ind}$ is the product distribution for the independent variables in $\tilde{\mb{\beta}}$ supported on $\{0,1\}^{n_2}$. We refer to this as the ``limited dependency'' model. The probability can be rewritten as:
 \begin{equation*}\label{eq:splitobjfnfeas}
\begin{array}{lll}
\mathbb{P}_{\theta}\bigg(\displaystyle \sum_{i=1}^{n_{1}} \tilde \alpha_{i}+\sum_{j=1}^{n_{2}} \tilde\beta_{j}\geq r\bigg)
&=& \displaystyle\sum_{\ell=0}^{n_{2}}\bigg[\mathbb{P}_{\theta_{\alpha}}\big(\displaystyle {\sum_{i=1}^{n_{1}} \tilde{\alpha}_{i}\geq r-\ell\big) \mathbb{P}_{\theta_{ind}}\big(\sum_{j=1}^{n_{2}} \tilde{\beta}_{j}= \ell}\big)\bigg],
\end{array}
\end{equation*}
where $\theta_{\alpha}$ is any feasible distribution of the random vector $\tilde{\mb{\alpha}}$ consistent with the given marginal information and:
 \begin{equation*}
\begin{array}{lll} \label{thetaalpha}
\displaystyle \Theta = \left\{\theta_{\alpha} \in \mathbb{P}(\{0,1\}^{n_1})\; : \;\mathbb{P}_{\theta_{\alpha}}\left(\tilde{\alpha}_i = 1
    \right) = p_i, \;
   \;\mbox{for}\;   i \in [n_1] \right\}.
\end{array}
\end{equation*}
In this case, it is possible to compute the probabilities $\mathbb{P}_{\theta_{ind}}\big(\sum_{j=1}^{n_{2}} \tilde{\beta}_{j}= \ell\big), \; \ell \in [0, n_{2}]$ in polynomial time using dynamic programming recursion \cite{chen1997statistical}. We can then reformulate \eqref{eq:limitedtailtight} as follows:
 \begin{equation}
\begin{array}{lll}\label{eq:splitobjfn}
 \underset{\theta \in \Theta_{\ell}}{\max}\;\mathbb{P}_{\theta}\bigg(\displaystyle \sum_{i=1}^{n_{1}} \tilde \alpha_{i}+\sum_{j=1}^{n_{2}} \tilde\beta_{j}\geq r\bigg)
= \underset{\theta_{\alpha} \in \Theta}{\max}\;\displaystyle\sum_{\ell=0}^{n_{2}}\bigg[\mathbb{P}_{\theta_{\alpha}}\big(\displaystyle {\sum_{i=1}^{n_{1}} \tilde{\alpha}_{i}\geq r-\ell\big) \mathbb{P}_{\theta_{ind}}\big(\sum_{j=1}^{n_{2}} \tilde{\beta}_{j}= \ell}\big)\bigg].
\end{array}
\end{equation}
By rewriting the tail probabilities as:
$$\displaystyle \mathbb{P}_{ \theta_{\alpha}}\big( \sum_{i=1}^{n_{1}} \tilde{\alpha}_{i}\geq r-\ell \big)=\sum_{t=r-\ell}^{n_1} \mathbb{P}_{\theta_{\alpha}}\big(\sum_{i=1}^{n_{1}} \tilde{\alpha}_{i}=t\big),$$  we can cast \eqref{eq:splitobjfn} in the form of a weighted probability function similar to that in \eqref{opt:exposumweight} with $n_1$ decision variables and weights $w_{\ell}=  \mathbb{P}_{\theta_{ind}}\big(\sum_{j=1}^{n_{2}} \tilde{\beta}_{j}= \ell\big),\;\mbox{for}\; \ell \in[0,n_2]$. The compact linear program \eqref{opt:primalsumweightcompact} can now be used to compute the tight bound.

In this model, when $n = n_1$ and $n_2 = 0$, all the random variables are extremally dependent and the tight bound $S(r,1)$ is retrieved.
Similarly, when $n = n_2$ and $n_1=0$, all the random variables are mutually independent and the tail probability bound $I(r,1)$ is retrieved. Besides the other bounds, we also consider a Poisson approximation to sum of Bernoulli random variables. \cite{lecam1960poissonapproximation} showed that the Poisson distribution can be used to approximate the probability distribution of sums of independent but not necessarily identical Bernoulli random variables, where the error of the approximation is small when the probabilities are small.
The \cite{stein1972normal}-\cite{chen1975poisson} approximation method extends this idea and develops error bounds for the Poisson approximation of the distribution of sums weakly dependent Bernoulli variables. 
We compare the {limited dependency} bounds computed from the compact linear program \eqref{opt:primalsumweightcompact} with the two extremes of extremal dependence and complete independence and three other probabilities computed using a Poisson approximation, a comonotonic bound computed with perfectly dependent random variables and the Markov bound. Figure \ref{fig:splitn=30} shows the six bounds for $n=30$ variables where the {limited dependency} bounds (in purple) have been selectively shown for $n_1=6,10,14,18,22,26$ (left to right). In Figure \ref{fig:split30small}, we consider  non-identical small marginal probabilities by uniformly and independently generating the marginal probabilities between $0.1$ and $0.15$ while in Figure \ref{fig:split30rand}, we uniformly generate the probabilities in $ [0,1]$.
\begin{figure}[h!]
\begin{minipage}{0.48\textwidth}
\includegraphics[scale=0.38]{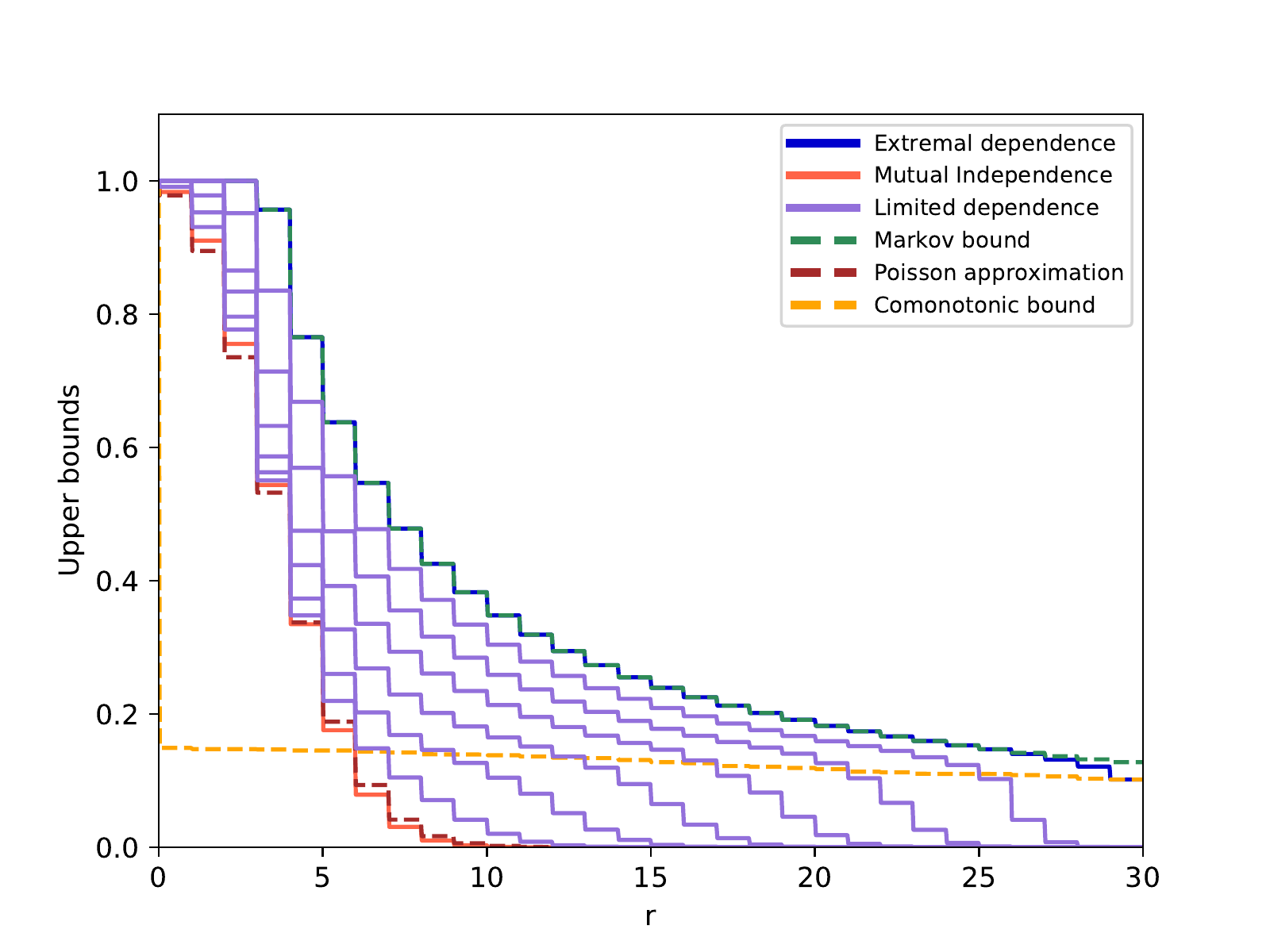}
\subcaption{Small range of marginal probabilities}
\label{fig:split30small}
\end{minipage}
\begin{minipage}{0.48\textwidth}
\includegraphics[scale=0.38]{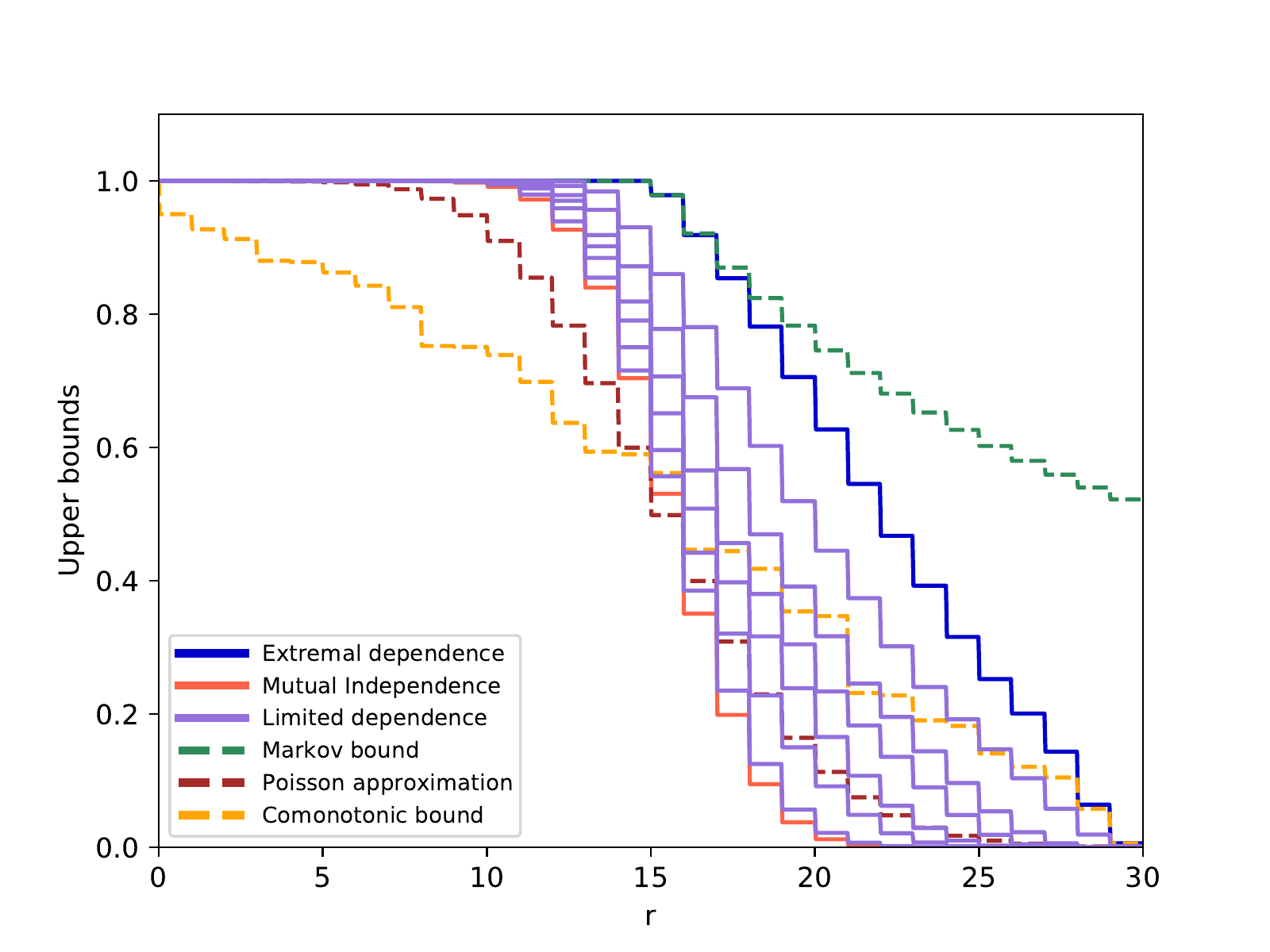}
\subcaption{Larger range of marginal probabilities}
\label{fig:split30rand}
\end{minipage}
\caption{Step plots of upper bounds for $n=30$}
\label{fig:splitn=30}
\end{figure}

The Poisson approximation closely follows the independent tail probability $I(r,1)$ in Figure \ref{fig:split30small} as the theory suggests with the assumption of small probabilities while in Figure \ref{fig:split30rand}, it initially underestimates the  independent tail probability (for $r \leq 15$) and then overestimates it. Due to the almost identical nature of the small probabilities in Figure \ref{fig:split30small}, the comonotonic bound plot remains almost flat for $r \geq 1$ and the Markov bound is very close to the extremally dependent bound $S(r,1)$ while this is not true in Figure \ref{fig:split30rand} due to the non-identical probabilities. The results indicate that the linear programming approach can appropriately incorporate both independence and dependence considerations in computing the extremal tail probability bounds.

\subsection{Random Walk: V-Polytope}
We now consider the maximum of partial sums of random variables, a problem arising from applications in random walks. Consider a random vector $\bold{\tilde{c}}$ of size  $n$ and let:
$$\displaystyle Z^{rw}(\bold{\tilde{c}}) = \max \left(\tilde{c}_1, \tilde{c}_1 + \tilde{c}_2, \ldots, \sum_{i=1}^n \tilde{c}_i\right),$$
where $\tilde{c}_i \in \{-1, 1\}$ for all $i \in [n]$. The tail behaviour of this quantity has been extensively studied (see \cite{Asmussen2001RuinP}) and is of interest in settings such as risk and queueing theory. For example, when $n \rightarrow \infty$ and the random variables are mutually independent, the Lundberg  inequality (see \cite{Asmussen1994}) gives the  tail probability bound,
$
\mathbb{P}_{\theta_{ind}}(Z^{rw}(\tilde{\bold{c}}) \geq r) \leq e^{-h_0r}
$,
where $h_0$ is parameter dependent on the moment generating function of the  distribution of $\tilde{\bold{c}}$.
 Several approximations for the distribution of $Z^{rw}(\tilde{\bold{c}})$ have been developed for the finite $n$ case (see \cite{Chung1948,korshunov1997distribution}) using the marginal distributions.  Here we consider the bounds on the tail probability with extremal dependence.

Let $U_{rw}(r)$ denote the maximum value of the tail probability over all joint distributions consistent with the given marginal distributions, $
U^{rw}(r) = \max_{\theta \in \Theta} \mathbb{P}(Z^{rw}(\tilde{\bold{c}}) \geq r)$.
 \Cref{fig:random_walk_identical} illustrates the probability bounds for the case of identical probabilities with $p_i = 0.5$ for all $i \in [n]$. `Tight UB' refers to the bound $U^{rw}(r)$. While the Markov bound applies to only non-negative random variables, in the random walk application considered, $Z^{rw}(\bold{c}) \in [-1, n]$. We therefore use the following variant,
\begin{align*}
\mathbb{P}(Z^{rw}(\tilde{\bold{c}})\geq r) = \mathbb{P}(Z^{rw}(\tilde{\bold{c}})+1 \geq r+1) \leq \min\left(\frac{ \max_{\theta \in \Theta} \mathbb{E}_{\theta}[Z^{rw}(\tilde{\bold{c}})] + 1}{r+1},1\right).
\end{align*}
We observe that the Markov bound is not a tight upper bound for this application. The probability bound `Worst exp' refers to a comonotone distribution here (since $Z^{rw}(\bold{c})$ is a supermodular function and the comonotone distribution maximizes expectation of supermodular functions) so that $\mathbb{P}(\tilde{c}_1 = 1, \ldots, \tilde{c}_n = 1) = 0.5$ and $\mathbb{P}(\tilde{c}_1 = -1, \ldots, \tilde{c}_n = -1) = 0.5$. The tight upper bound labelled `Tight UB' gives $U^{rw}(r)$ and is attained by a different distribution from the comonotone distribution. Similar trends are observed for the case of non-identical probabilities  in \Cref{fig:random_walk_non_identical}.
\begin{figure}[h!]
\begin{minipage}{0.45\textwidth}
\includegraphics[scale=0.43]{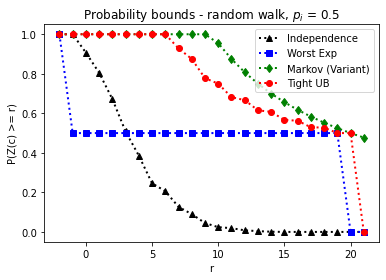}
\subcaption{The case of identical probabilities, $p_i = 0.5$.}
\label{fig:random_walk_identical}
\end{minipage}
\begin{minipage}{0.45\textwidth}
\includegraphics[scale=0.43]{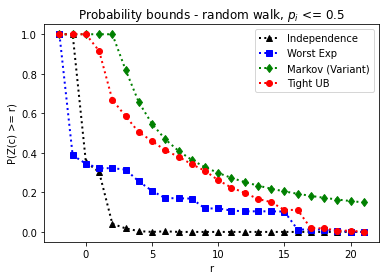}
\subcaption{A case of non-identical probabilities, $p_i \leq 0.5$.}
\label{fig:random_walk_non_identical}
\end{minipage}
\caption{Probability bounds for the random walk application.}
\label{fig:random_walk}
\end{figure}

\subsection{PERT Networks: H-Polytope}
We now discuss our numerical results in the context of PERT networks. We compute $U^{pert}(r)$ using the linear program in \Cref{thm:pert_primal_lp}. In the plots, this bound is denoted by the label `Tight UB'. The Markov bound is computed as $ \min(\max \mathbb{E}[Z(\tilde{\bold{c}})]/r,1)$ where the maximum  possible expectation bound is computed in polynomial time in the size of the graph using the below tight formulation from \cite{meilijson1979convex}.
\begin{align*}
\begin{array}{rllll}
 \max \mathbb{E}[Z(\tilde{\bold{c}})] = \min_{\bold{y}, \bold{d}, \bold{u}} & \displaystyle u_s + \sum_{(i,j) \in E} \sum_{k=1}^K p_{ijk} y_{ijk} \\
\mbox{s.t } &\displaystyle u_i - u_j \geq d_{ij},  \text{ for } (i,j) \in E, \\
&\displaystyle u_t = 0, \\
&\displaystyle y_{ijk} \geq k - d_{ij}, \text{ for } (i,j) \in E, \text{ for } k \in [0,K],\\
&\displaystyle \bold{y} \geq 0 , \bold{d}, \bold{u} \text{ unrestricted}.
\end{array}
\end{align*}
Formulation \eqref{opt:max_exp_z_c_minus_r} is used to obtain the tail probability from a distribution that maximizes $\mathbb{E}[Z(\tilde{\bold{c}}) -r]^+$, where $\mathcal{X}$ denotes the set of $s$-$t$ paths for PERT networks.

The network  in \Cref{fig:example4} with $ n  = 24$ nodes and a total of $29$ edges or activities is considered. There are a total of $14$ paths from $s$ to $t$. The longest path from $s$ to $t$ contains $10$ edges and hence the maximum possible completion time of the project is $10K$, where $K$ is the maximum possible duration of each of the activities. This network was presented in \cite{Birge1995,Bertsimas2004} where the worst case bounds for the expected time of completion was computed.  In the examples we consider, for all  edges $(i,j)$, the probability $p_{ijk} = 1/(K+1)$. We take $K=10$.

\begin{minipage}{0.48\linewidth}
\begin{tikzpicture}[node distance=0.6cm,
  inner/.style={circle,draw=black,thick,inner sep=0.6 pt},  ]
  \node[inner](s) {s};
      \node [inner, above right=0.5cm of s] (1)  {1};
      \node [inner, right=0.5cm of s] (2) {2};
      \node [inner,below right=0.5cm of s] (3) {3};
       \node [inner,  right=of 3] (4)  {4};
            \node [inner,  right=of 4] (5)  {5};
                   \node [inner,  right=0.6cm of 5] (6)  {6};
               \node [inner,  right=0.6cm of 6] (7)  {7};
           \node [inner,  right=0.6cm of 7] (8)  {8};
        \node [inner, above right=0.5cm of 1] (9)  {9};
         \node [inner,  right=0.4cm of 9] (10)  {10};
           \node [inner, above right=of 9] (16)  {16};
            \node [inner,  above left=0.4 cm of 5] (11)  {11};
                   \node [inner,  right=of 11] (12)  {12};
                    \node [inner,  right=of 12] (13)  {13};
               \node [inner,   right=of 10] (14)  {14};
            \node [inner,  right=of 14] (15)  {15};
      \node [inner,  right=of 16] (17)  {17};
     \node [inner,  right=of 17] (18)  {18};
     \node[inner, below right= of 10] (19) {19};
      \node [inner,  right=of 19] (20)  {20};
       \node [inner,  right=of 20] (21)  {21};
        \node [inner,  right=0.3cm of 21] (22)  {22};
         \node [inner,  below=0.3cm of 22] (t)  {t};

\draw[thick,->] (s) -- (1);
\draw[thick,->] (s) -- (2);
\draw[thick,->] (s) -- (3);
\draw[thick,->] (1) -- (9);
\draw[thick,->] (2) -- (4);
\draw[thick,->] (3) -- (4);
\draw[thick,->] (4) -- (5);
\draw[thick,->] (4) -- (9);
\draw[thick,->] (5) -- (6);
\draw[thick,->] (6) -- (7);
\draw[thick,->] (7) -- (8);
\draw[thick,->] (8) -- (13);
\draw[thick,->] (9) -- (10);
\draw[thick,->] (9) -- (16);
\draw[thick,->] (10) -- (11);
\draw[thick,->] (10) -- (14);
\draw[thick,->] (10) -- (19);
\draw[thick,->] (11) -- (12);
\draw[thick,->] (12) -- (13);
\draw[thick,->] (13) -- (21);
\draw[thick,->] (14) -- (15);
\draw[thick,->] (15) -- (21);
\draw[thick,->] (16) -- (17);
\draw[thick,->] (17) -- (18);
\draw[thick,->] (18) -- (21);
\draw[thick,->] (19) -- (20);
\draw[thick,->] (20) -- (21);
\draw[thick,->] (21) -- (22);
\draw[thick,->] (22) -- (t);
\end{tikzpicture}
\captionof{figure}{Example 3 }
\label{fig:example4}
\end{minipage}
\begin{minipage}{0.45\linewidth}
\includegraphics[scale=0.43]{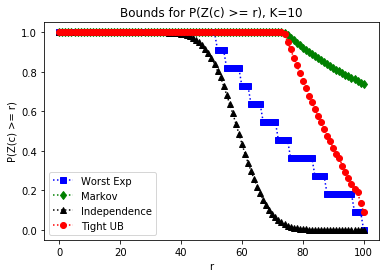}
\end{minipage}

The Markov bound is not tight for this example while the gaps from independence and worst exp demonstrate significant gap with the tight bound. Here, the worst exp curve is closer to Tight UB than independence. However the distribution maximizing the worst case expectation does not maximize the tail probability.

\subsubsection{Comparison of Bounds on Randomly Generated Instances}
We now compare our bounds against the Markov bound and the bound from the independent distribution for a set of $50$ randomly generated graphs and univariate marginals on $n= 10$ nodes with $K=10$.  In \Cref{fig:gaps_markov}, we report the gap $M(r) - U^{pert}(r)$  for various values of $r$ where $M(r)$ represents the Markov bound. The bars indicate the range between the minimum and maximum gaps while the dotted line provides the mean gap. Observe that the Markov bounds are not tight in general and always provide an upper bound for $U^{pert}(.)$. In \Cref{fig:gaps_independence}, we  report the gap $ U^{pert}(r) - \mathbb{P}_{\theta_{ind}} (Z(\tilde{\bold{c}} \geq r)$ where $\theta_{ind}$ denotes the independent distribution. The independent distribution serves as lower bound for $U^{pert}(r)$ and is clearly not an extremal distribution.
\begin{figure}[h!]
\begin{minipage}{0.48\textwidth}
\includegraphics[scale=0.45]{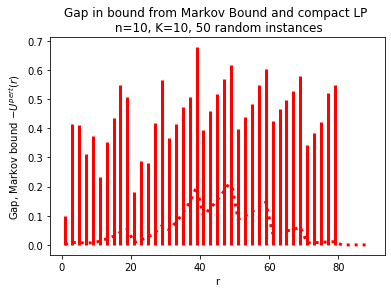}
\subcaption{Gap in Markov bound}
\label{fig:gaps_markov}
\end{minipage}
\begin{minipage}{0.48\textwidth}
\includegraphics[scale=0.45]{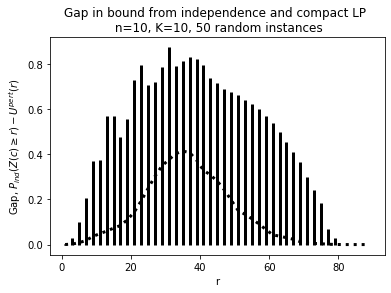}
\subcaption{Gap in independence}
\label{fig:gaps_independence}
\end{minipage}
\caption{Comparison of gaps in various bounds over 50 randomly generated instances. }
\label{fig:bd_comparison}
\end{figure}

\subsubsection{Computational Times}
We now report the computational times of our compact linear program as a function of the number of nodes $n$ as well as a function of $K$. \Cref{fig:times_with_n} shows the error bars of the execution time as a function of $n$, over $50$ random instances with $r=40$ and $K$ fixed to $10$.  Even for $n=100$ nodes, the execution time is about $1.2$ seconds on an average.  We performed the experiment for various values of $r \in \{10, \ldots, 50 \}$, however we did not observe significant difference in the results. In \Cref{fig:times_with_k}, we  provide the error bars of  the execution time as a function of $K$, with $r=50$ and $n = 20$.  Over all instances, our compact LP takes a maximum of $0.45$ seconds even when the support for the activity durations goes till $K=100$.
\begin{figure}[h!]
\begin{minipage}{0.48\textwidth}
\includegraphics[scale=0.43]{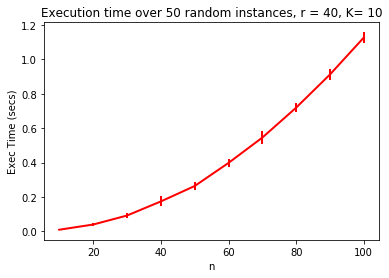}
\subcaption{}
\label{fig:times_with_n}
\end{minipage}
\begin{minipage}{0.48\textwidth}
\includegraphics[scale=0.43]{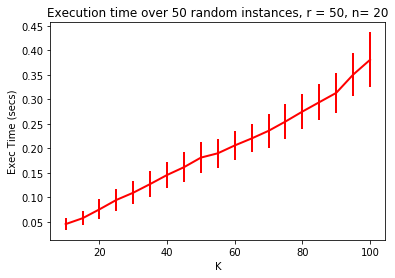}
\subcaption{}
\label{fig:times_with_k}
\end{minipage}
\caption{Execution times of our compact linear program}
\label{fig:execution_time}
\end{figure}
\vspace{-1cm}

\section*{Acknowledgements}
The research of the fourth author was partly supported by MOE Academic Research Fund Tier 2 grant T2MOE1906, ``Enhancing Robustness of Networks to Dependence via Optimization''.
\bibliographystyle{siamplain}
\bibliography{prob_bounds_lps_v3}

\begin{thebibliography}{10}

\bibitem{Asmussen2001RuinP}
{\sc S.~Asmussen}, {\em Ruin probabilities}, in Advanced series on statistical
  science and applied probability, 2001.

\bibitem{Asmussen1994}
{\sc S.~Asmussen and T.~Rolski}, {\em Risk theory in a periodic environment:
  The cramér-lundberg approximation and lundberg's inequality}, Mathematics of
  Operations Research, 19 (1994), pp.~410--433.

\bibitem{Ball1995}
{\sc M.~O. Ball, C.~J. Colbourn, and J.~S. Provan}, {\em Chapter 11 network
  reliability}, in Network Models, vol.~7 of Handbooks in Operations Research
  and Management Science, Elsevier, 1995, pp.~673 -- 762.

\bibitem{Bertsimas2004}
{\sc D.~Bertsimas, K.~Natarajan, and C.-P. Teo}, {\em Probabilistic
  combinatorial optimization: Moments, semidefinite programming, and asymptotic
  bounds}, SIAM Journal on Optimization, 15 (2004), pp.~185--209.

\bibitem{Birge1995}
{\sc J.~R. Birge and M.~J. Maddox}, {\em Bounds on expected project tardiness},
  Operations Research, 43 (1995), pp.~838--850.

\bibitem{blanchet2021convolution}
{\sc J.~Blanchet, H.~Lam, Y.~Liu, and R.~Wang}, {\em Convolution bounds on
  quantile aggregation}, 2021, \url{https://arxiv.org/abs/2007.09320}.

\bibitem{chen1975poisson}
{\sc L.~H. Chen}, {\em Poisson approximation for dependent trials}, The Annals
  of Probability, 3 (1975), pp.~534--545.

\bibitem{chen1997statistical}
{\sc S.~X. Chen and J.~S. Liu}, {\em Statistical applications of the
  poisson-binomial and conditional bernoulli distributions}, Statistica Sinica,
   (1997), pp.~875--892.

\bibitem{Chung1948}
{\sc K.~L. Chung}, {\em On the maximum partial sums of sequences of independent
  random variables}, Transactions of the American Mathematical Society, 64
  (1948), pp.~205--233.

\bibitem{Doan2012}
{\sc X.~V. Doan and K.~Natarajan}, {\em On the complexity of nonoverlapping
  multivariate marginal bounds for probabilistic combinatorial optimization
  problems}, Operations Research, 60 (2012), pp.~138--149.

\bibitem{Dodin1985}
{\sc B.~Dodin}, {\em Bounding the project completion time distribution in
  {PERT} networks}, Operations Research, 33 (1985), pp.~862--881.

\bibitem{elmaghraby1977activity}
{\sc S.~E. Elmaghraby}, {\em Activity networks: Project planning and control by
  network models}, John Wiley \& Sons, 1977.

\bibitem{embrechts2006bounds}
{\sc P.~Embrechts and G.~Puccetti}, {\em Bounds for functions of dependent
  risks}, Finance and Stochastics, 10 (2006), pp.~341--352.

\bibitem{Fulkerson1962}
{\sc D.~R. Fulkerson}, {\em Expected critical path lengths in {PERT} networks},
  Operations Research, 10 (1962), pp.~808--817.

\bibitem{garey1979computers}
{\sc M.~R. Garey and D.~S. Johnson}, {\em Computers and intractability},
  vol.~174, freeman San Francisco, 1979.

\bibitem{grotschel2012geometric}
{\sc M.~Gr{\"o}tschel, L.~Lov{\'a}sz, and A.~Schrijver}, {\em Geometric
  algorithms and combinatorial optimization}, vol.~2, Springer Science \&
  Business Media, 2012.

\bibitem{gurobi}
{\sc L.~Gurobi~Optimization}, {\em Gurobi optimizer reference manual}, 2020,
  \url{http://www.gurobi.com}.

\bibitem{Hagstrom1988}
{\sc J.~N. Hagstrom}, {\em Computational complexity of {PERT} problems},
  Networks, 18 (1988), pp.~139--147.

\bibitem{hailperin1965}
{\sc T.~Hailperin}, {\em \textit{Best possible inequalities for the probability
  of a logical function of events}}, \emph {The American Mathematical Monthly},
  72 (1965), pp.~343--359.

\bibitem{Hanasusanto2017}
{\sc G.~A. Hanasusanto, V.~Roitch, D.~Kuhn, and W.~Wiesemann}, {\em Ambiguous
  joint chance constraints under mean and dispersion information}, Operations
  Research, 65 (2017), pp.~751--767.

\bibitem{kellerer2004multiple}
{\sc H.~Kellerer, U.~Pferschy, and D.~Pisinger}, {\em The multiple-choice
  knapsack problem}, in Knapsack Problems, Springer, 2004, pp.~317--347.

\bibitem{kleinberg}
{\sc J.~Kleinberg, Y.~Rabani, and E.~Tardos}, {\em Allocating bandwidth for
  bursty connections}, SIAM Journal on Computing, 30 (2000), pp.~191--217.

\bibitem{Kleindorfer1971}
{\sc G.~B. Kleindorfer}, {\em Bounding distributions for a stochastic acyclic
  network}, Operations Research, 19 (1971), pp.~1586--1601.

\bibitem{korshunov1997distribution}
{\sc D.~Korshunov}, {\em On distribution tail of the maximum of a random walk},
  Stochastic Processes and their Applications, 72 (1997), pp.~97--103.

\bibitem{Kreinovich2006}
{\sc V.~Kreinovich and S.~Ferson}, {\em Computing best-possible bounds for the
  distribution of a sum of several variables is np-hard}, International Journal
  of Approximate Reasoning, 41 (2006), pp.~331 -- 342.

\bibitem{waykuo}
{\sc W.~Kuo and M.~J. Zuo}, {\em Optimal Reliability Modeling: Principles and
  Applications}, John Wiley \& Sons, 2003.

\bibitem{lecam1960poissonapproximation}
{\sc L.~Le~Cam}, {\em An approximation theorem for the poisson binomial
  distribution.}, Pacific Journal of Mathematics, 10 (1960), pp.~1181--1197.

\bibitem{makarov}
{\sc G.~D. Makarov}, {\em Estimates for the distribution function of a sum of
  two random variables when the marginal distributions are fixed}, Theory of
  Probability \& Its Applications, 26 (1981), pp.~803--806.

\bibitem{meilijson1979convex}
{\sc I.~Meilijson and A.~N{\'a}das}, {\em Convex majorization with an
  application to the length of critical paths}, Journal of Applied Probability,
  16 (1979), pp.~671--677.

\bibitem{Mohring2001}
{\sc R.~H. M{\"o}hring}, {\em Scheduling under Uncertainty: Bounding the
  Makespan Distribution}, Springer Berlin Heidelberg, Berlin, Heidelberg, 2001,
  pp.~79--97.

\bibitem{Puccetti2012}
{\sc G.~Puccetti and L.~Rüschendorf}, {\em Computation of sharp bounds on the
  distribution of a function of dependent risks}, Journal of Computational and
  Applied Mathematics, 236 (2012), pp.~1833 -- 1840.

\bibitem{Ringer1971}
{\sc L.~J. Ringer}, {\em A statistical theory for {PERT} in which completion
  times of activities are inter-dependent}, Management Science, 17 (1971),
  pp.~717--723.

\bibitem{ROOS2021918}
{\sc E.~Roos and D.~{den Hertog}}, {\em A distributionally robust analysis of
  the program evaluation and review technique}, European Journal of Operational
  Research, 291 (2021), pp.~918--928.

\bibitem{ruger1978}
{\sc B.~Ruger}, {\em \textit{Das maximale Signifikanzniveau des Tests Lehne Ho
  ab, wenn k unter n gegebenen Tests zur Ablehnungfuhren}}, \emph {Metrika}, 25
  (1978), pp.~171--178.

\bibitem{ruschbook3}
{\sc L.~R\"{u}schendorf}, {\em Mathematical Risk Analysis: Dependence, Risk
  Bounds, Optimal Allocations and Portfolios}, Springer Series in Operations
  Research and Financial Engineering, Springer, {S}econd~ed., 2013.

\bibitem{ruschendorff1982}
{\sc L.~Rüschendorf}, {\em Random variables with maximum sums}, Advances in
  Applied Probability, 14 (1982), pp.~623--632.

\bibitem{stein1972normal}
{\sc C.~Stein}, {\em A bound for the error in the normal approximation to the
  distribution of a sum of dependent random variables}, in Proceedings of the
  sixth Berkeley symposium on mathematical statistics and probability, volume
  2: Probability theory, University of California Press, 1972, pp.~583--602.

\bibitem{wang_2014}
{\sc R.~Wang}, {\em Asymptotic bounds for the distribution of the sum of
  dependent random variables}, Journal of Applied Probability, 51 (2014),
  p.~780–798.

\bibitem{wang2013bounds}
{\sc R.~Wang, L.~Peng, and J.~Yang}, {\em Bounds for the sum of dependent risks
  and worst value-at-risk with monotone marginal densities}, Finance and
  Stochastics, 17 (2013), pp.~395--417.

\bibitem{Weiss1986}
{\sc G.~Weiss}, {\em Stochastic bounds on distributions of optimal value
  functions with applications to {PERT}, network flows and reliability},
  Operations Research, 34 (1986), pp.~595--605.

\bibitem{wiesemann2012optimization}
{\sc W.~Wiesemann}, {\em Optimization of temporal networks under uncertainty},
  vol.~10, Springer Science \& Business Media, 2012.

\bibitem{xie2019optimized}
{\sc W.~Xie, S.~Ahmed, and R.~Jiang}, {\em Optimized bonferroni approximations
  of distributionally robust joint chance constraints}, Mathematical
  Programming,  (2019), pp.~1--34.

\bibitem{Zemel1982}
{\sc E.~Zemel}, {\em Polynomial algorithms for estimating network reliability},
  Networks, 12 (1982), pp.~439--452.

\end{thebibliography}
\end{document}